\documentclass{article}

\usepackage{paulmathdocs}
\usepackage{thm-restate}
\usepackage{authblk}
\newtheoremstyle{pmd-break-label}
  {\topsep}{\topsep}{\normalfont}{}{\bfseries}{.}{\newline}
  {\thmname{#1}\thmnote{ #3}}
\theoremstyle{pmd-break-label}
\newtheorem*{corollary*}{Corollary}
\newtheorem*{theorem*}{Theorem}
\newcommand{\mainTheoremRnStatement}{%
  For every $n \in \bbN$, there exists a subset $A \subseteq \bbR^n$ such that
  $$ \dH(\bbR^n, A) = 1 \quad \text{ and } \quad \dGH(\bbR^n, A) = \sqrt{\frac{n+1}{2n}} . $$
}

\newcommand{\mainTheoremManifoldsStatement}{%
  Let $M$ be a connected Riemannian manifold without boundary of dimension $m \in \bbN$, let $a \in M$, and let $L > 1$.
  \\ Then there exists an $r_0 > 0$ such that for all $r \in (0, r_0)$ we have
  $$ \dGH(M \setminus B_r(a), M) \leq L \cdot \sqrt{\frac{m+1}{2m}} \dH(M \setminus B_r(a), M) \quad \text{ and } \quad \dH(M \setminus B_r(a), M) = r $$
  where $M \setminus B_r(a)$ is equipped with the subspace metric.
}

\title{A converse bound for $\dGH$ vs. $\dH$ for Euclidean space and Riemannian manifolds}
\author{Paul Schott}
\affil{Karlsruher Institut für Technologie}
\date{}

\begin{document}

\maketitle

\begin{abstract}
  We prove for the Euclidean space $\left(\bbR^n, \|.\|_2\right)$ the existence of subsets $A^n \subseteq \bbR^n$ with $\dH(\bbR^n, A^n)$ arbitrarily small, such that $\dGH(\bbR^n, A^n) = \sqrt{\frac{n+1}{2n}}\dH(\bbR^n, A^n)$, showing that the lower bound \linebreak $\dGH(\bbR^n, A^n) \geq \sqrt{\frac{n+1}{2n}}\dH(\bbR^n, A^n)$ proved in \cite{AdamsBogatyiFrickEtAl2026} is tight for sets that are arbitrarily dense in the surrounding space. We also prove a similar statement for Riemannian manifolds and sufficiently dense subsets as a converse to \cite{AdamsFrickMajhiMcBride2023}.
\end{abstract}

\section{Introduction}

  While originally introduced in the 1970s and 1980s as a means of describing convergence of compact metric spaces, the Gromov--Hausdorff distance has found many new applications as a theoretical framework in research areas like shape-matching, see \cite{Schmiedl2017}.
  In its full generality, it is very hard to calculate or closely bound the Gromov--Hausdorff distance from both sides at once, and even for the restrictive class of metric trees, approximating it within a relative factor less than three was proven to be NP-hard in \cite{AgarwalFoxNathEtAl2018}.
  \\ In the situation that we compare a space $X$ and a subspace $A \subseteq X$, we get an upper bound for the Gromov-Hausdorff distance through the related Hausdorff distance. Generally, one easily sees that the Hausdorff distance can become arbitrarily large while the Gromov--Hausdorff distance becomes arbitrarily small. However, in \cite{AdamsFrickMajhiMcBride2023}, \cite{AdamsMajhiManinVirkZava2024MetricGraphs}, or \cite{Schott2026MetricGraphs} the authors proved sufficient criteria on the pairs $(X, A)$ of spaces and their subsets that guarantee converse inequalities of the form
  $$ \dGH(X, A) \geq c_X \dH(X, A) \quad \text{ for } \quad \dH(X, A) \leq r_X $$ 
  for some $c_X, r_X > 0$ depending only on $X$.
  \\ In particular, as a special case of a result from \cite{AdamsBogatyiFrickEtAl2026}, we have:
  \hypertarget{cor:euclidean-lower-bound}{}
  \begin{corollary*}[A {\normalfont(cf. Theorem~3.1 and the remark after Theorem~2.9 in \cite{AdamsBogatyiFrickEtAl2026})}]
    Let $n \in \bbN$ and let $A \subseteq \bbR^n$ satisfy $0 < \dH(\bbR^n, A) < \infty$. Then
    $$
      \frac{\dGH(\bbR^n, A)}{\dH(\bbR^n, A)}
      \geq \sqrt{\frac{n+1}{2n}} .
    $$
  \end{corollary*}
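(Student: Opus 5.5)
Since both $\dGH(\bbR^n,A)$ and $\dH(\bbR^n,A)$ scale linearly under dilations of $\bbR^n$, I will normalize $r := \dH(\bbR^n, A) = 1$. Suppose, for contradiction, that there is a correspondence $\mathcal R \subseteq \bbR^n \times A$ with distortion $2\delta < 2\sqrt{\frac{n+1}{2n}}$. The constant is exactly the circumradius-to-edge relation of the regular $n$-simplex: a simplex inscribed in the unit sphere has edge $\sqrt{2(n+1)/n}$. Jung's theorem gives the converse direction: a set of diameter $D$ in $\bbR^n$ lies in a ball of radius $D\sqrt{\frac{n}{2(n+1)}}$. The plan is to play these two facts against each other, using a ``hole'' of $A$ on one side and the absence of holes in $\bbR^n$ on the other.

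\textbf{Step 1 (a hole with a surrounding simplex).} Choose $z \in \bbR^n$ with $\operatorname{dist}(z,A) \geq 1-\varepsilon$. Let $a$ be a nearest point of $A$ to $z$, and consider the points of $A$ on or near the sphere of radius $\operatorname{dist}(z,A)$ about $z$. By a standard maximality argument (the empty ball cannot be moved to increase its radius), $z$ lies in the convex hull of finitely many points $a_0,\dots,a_k\in A$, $k\le n$, each at distance $\geq 1-\varepsilon$ from $z$. I will use the elementary estimate that such a configuration has diameter at least $(1-\varepsilon)\sqrt{2(n+1)/n}$. Equivalently, no ball of radius less than $1-\varepsilon$ whose centre lies in their hull contains them all. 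By pushing $z$ to a near-maximal hole, one may moreover arrange that every point $b\in A$ satisfies $\max_i |b-a_i| \gtrsim 1$.

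\textbf{Step 2 (transfer through $\mathcal R$).} Pick $p_i \in \bbR^n$ with $(p_i,a_i)\in\mathcal R$. Then $\operatorname{diam}\{p_i\} \le \operatorname{diam}\{a_i\} + 2\delta$. In $\bbR^n$ there is no hole, so Jung's theorem gives a point $w$ (the Chebyshev centre of the $p_i$) with $|w-p_i| \le \sqrt{\frac{n}{2(n+1)}}\,\operatorname{diam}\{p_i\}$ for all $i$. Let $b \in A$ be a correspondent of $w$. Then $|b - a_i| \le |w-p_i| + 2\delta$ for all $i$, so $b$ is an $A$-point uniformly close to all the $a_i$. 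Comparing with the lower bound from Step 1 yields an inequality of the form $1 \lesssim c_n(\operatorname{diam} + 2\delta) + 2\delta$. This is not yet sharp, because the diameter term enters with the wrong sign.

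\textbf{Step 3 (removing the slack; main obstacle).} The naive transfer loses additive $2\delta$ terms twice. To get the sharp constant I would exploit the unboundedness of $\bbR^n$. A correspondence of finite distortion between $\bbR^n$ and a $1$-dense subset is, after rescaling by $1/R$ and letting $R\to\infty$, asymptotically an isometry (a Hyers--Ulam-type stability argument). So after composing with a Euclidean isometry, $\mathcal R$ is close to the identity on large scales. I would then replace the single simplex by the whole family of holes of $A$ and use a degree/KKM argument. Consider the continuous approximation of the composite map $\bbR^n \to A \subseteq \bbR^n$; it has degree $1$ on large spheres, so some point $w$ must be sent into the closed region spanned by the simplex $a_0\cdots a_k$ around a hole. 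Its correspondent $b\in A$ must then be within $2\delta$-controlled distance of a point inside the hole, forcing $2\delta \ge$ the circumradius bound $\sqrt{2(n+1)/n}\cdot\tfrac12$, i.e.\ $\delta \ge \sqrt{\frac{n+1}{2n}}$. Making this topological step precise, with the additive errors absorbed by the large-scale rigidity, is where I expect the real difficulty to lie. If it fails, I would fall back on citing the general argument of \cite{AdamsBogatyiFrickEtAl2026} (Theorem~3.1) specialized to $X=\bbR^n$.
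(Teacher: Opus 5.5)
Your from-scratch argument has a genuine gap. Steps 1--2 do not give the constant, as you note yourself, and Step 3, where $l_n=\sqrt{2(n+1)/n}$ would have to appear, contains no mechanism that produces it.

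The Jung-type estimate is applied to the wrong sets. In Step 1 you apply it to the points $a_i$ surrounding the hole, whose diameter has nothing to do with $\dis(\mathcal{R})$. In Step 2 you apply Jung's theorem in $\bbR^n$ to the $p_i$, so $\operatorname{diam}\{a_i\}$ enters with the wrong sign, and the resulting inequality is lossy even for a perfectly regular surrounding simplex.

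In Step 3 the only estimate you actually extract is that the correspondent $b\in A$ of $w$ lies within about $2\delta$ of a point of the hole. Even in the best case, where that point is the centre $z$ and the continuous approximation $F$ satisfies $F(w)=z$, this gives only $\rho\le\|b-z\|\le 2\delta+\eta$, with $\eta$ the error of the approximation. In the limit that is $\dis(\mathcal{R})\ge\dH(\bbR^n,A)$, i.e. the constant $\frac{1}{2}$ rather than $\sqrt{\frac{n+1}{2n}}$. Your last line also drops a factor of $2$: $2\delta\ge\frac{1}{2}\sqrt{2(n+1)/n}$ only says $\delta\ge\frac{1}{2}\sqrt{\frac{n+1}{2n}}$.

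Finally, large-scale rigidity cannot ``absorb'' the additive errors. Hyers--Ulam-type results put $\mathcal{R}$ only within $O(\delta)$ of an isometry, which is exactly the scale of the hole. Rigidity can supply a degree statement, but it cannot sharpen a constant.

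The missing idea is to apply Jung's theorem inside $A$, to the images of tiny simplices.
\begin{itemize}
\item Fix $\rho<1$, a point $z$ with $d(z,A)>\rho$, a selection $f\colon\bbR^n\to A$ of $\mathcal{R}$, and a triangulation of $\bbR^n$ of mesh $\eta$. Let $F$ be the piecewise-affine map that agrees with $f$ on vertices.
\item The vertex images of each simplex form a set $S\subseteq A$ with $\operatorname{diam}S\le\dis(\mathcal{R})+\eta$.
\item If $z=\sum_i\lambda_i s_i\in\operatorname{conv}S$, then for every $c\in\bbR^n$ one has $\sum_i\lambda_i\|s_i-c\|^2=\sum_i\lambda_i\|s_i-z\|^2+\|z-c\|^2>\rho^2$. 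So $S$ has circumradius greater than $\rho$, and Jung's theorem gives $\operatorname{diam}S>l_n\rho$. Hence $F$ misses $z$ whenever $\dis(\mathcal{R})+\eta\le l_n\rho$.
\item On the other hand $\|F(x)-f(x)\|\le\dis(\mathcal{R})+\eta$ for all $x$, so $F$ is a continuous rough isometry of $\bbR^n$. On spheres of large radius $T$ it is $o(T)$-close to an isometry, so it has winding number $\pm1$ around $z$ and must hit $z$.
\end{itemize}
This is the only place a rigidity/degree argument is needed, and the surrounding simplex of Step 1 plays no role. Letting $\rho\to1$ and $\eta\to0$ gives $\dis(\mathcal{R})\ge l_n$.

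For comparison, the paper does not prove Corollary~A itself; it imports it from Theorem~3.1 of \cite{AdamsBogatyiFrickEtAl2026}. So your fallback is exactly what the paper does, but the self-contained route you sketch does not establish the statement.
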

  In \cite{AdamsFrickMajhiMcBride2023}, the authors proved a similar result for compact Riemannian manifolds of bounded sectional curvature:
  \hypertarget{cor:manifold-lower-bound}{}
  \begin{corollary*}[B {\normalfont(cf. Theorem~4 in \cite{AdamsFrickMajhiMcBride2023})}]
    Let $M$ be a connected, compact Riemannian manifold without boundary of dimension $n$, with sectional curvature bounded above by $\kappa \in \bbR$.
    Then there exists $\varepsilon_M>0$ such that, for every $A \subseteq M$ with $0 < \dH(A,M) < \varepsilon_M$,
    $$
      \frac{\dGH(A,M)}{\dH(A,M)} \geq \alpha(n,\kappa),
    $$
    where
    $$
      \alpha(n,\kappa) :=
      \begin{cases}
        \sqrt{\frac{n+1}{2n}}, & \kappa \leq 0,\\[1.2em]
        \sqrt{\frac{n+1}{2n}}
        \dfrac{\sin\!\left(\frac{\pi}{2}\sqrt{\frac{\kappa}{\kappa+1}}\right)}
        {\frac{\pi}{2}\sqrt{\frac{\kappa}{\kappa+1}}}, & \kappa > 0.
      \end{cases}
    $$
  \end{corollary*}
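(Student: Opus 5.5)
The plan is to go through a Jung-type diameter estimate in the tangent space at a point that is farthest from $A$, with a topological surrounding argument supplying the points to which Jung's bound is applied. The curvature bound enters only when we compare distances in $M$ with distances in normal coordinates.

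\textbf{Setup and the metric chain.} Write $r := \dH(A,M)$ and $\delta := \dGH(A,M)$. By compactness of $M$ we fix $x \in M$ with $d(x,A) = r$, so that the open ball $B_r(x)$ misses $A$. We fix $\eta > 0$ and a map $\psi \colon M \to A$ coming from a correspondence of distortion at most $2(\delta+\eta)$. Choose $\varepsilon_M$ below the injectivity and convexity radius of $M$ (times a constant), so that for $r < \varepsilon_M$ everything happens in a normal chart around $x$. The aim is to find points $y_0,\dots,y_n$ in a tiny ball $B_\rho(x)$ with the following property: in normal coordinates at $x$, the convex hull of $\exp_x^{-1}\psi(y_0),\dots,\exp_x^{-1}\psi(y_n)$ contains the origin. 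Granting this, the argument runs in three steps.
\begin{enumerate}
\item Each $\psi(y_i)$ lies in $A$, hence at distance at least $r$ from $x$.
\item A Jung-type lemma says that if $n+1$ points of $\bbR^n$ lie at distance at least $r$ from a point of their convex hull, then two of them are at distance at least $2r\sqrt{\tfrac{n+1}{2n}}$. Equality holds for the regular simplex; this is the dual form of Jung's constant $\sqrt{n/(2(n+1))}$.
\item Distortion gives $d(\psi(y_i),\psi(y_j)) \le d(y_i,y_j) + 2(\delta+\eta) \le 2\rho + 2\delta + 2\eta$.
\end{enumerate}
Comparing the two bounds and letting $\rho,\eta \to 0$ yields $\delta \ge \sqrt{\tfrac{n+1}{2n}}\, r$, provided that Euclidean and Riemannian distances can be compared, which is the curvature step below.

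\textbf{Topological step (main obstacle).} We must show that $\psi$, restricted near $x$, ``surrounds'' $x$. I would try a degree argument first. The map $\psi$ moves every point by at most about $r+\delta$. Since $\delta$ is at most comparable to $r$, on a sphere $S$ of radius somewhat larger than $r+\delta$ around $x$ (still inside the chart) the straight-line homotopy from the identity to $\psi$ avoids $x$. Hence $\psi|_S$ has degree $1$ around $x$ in $\bbR^n\setminus\{0\}$.

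The difficulty is that we want the surrounding points to come from a tiny ball, not from the large sphere $S$. To get this, I would replace $\psi$ by a continuous approximation $\tilde\psi$, for instance a partition-of-unity average of $\psi$ on a fine net in normal coordinates, with $\tilde\psi(y)$ in the convex hull of values $\psi(y')$ for nearby $y'$. Since $\tilde\psi|_S$ has degree $1$ around the origin, $\tilde\psi$ hits the origin, so $\tilde\psi(y^*) = 0$ for some $y^*$. The origin then lies in a convex combination of $\psi(y')$ with all $y'$ in a small ball around $y^*$. Carathéodory reduces this to $n+1$ such points. Since the argument only needs the $y_i$ to be mutually close, these points serve as $y_0,\dots,y_n$, with $\rho$ replaced by the mesh of the net.

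\textbf{Curvature step.} For $\kappa \le 0$, Rauch comparison shows that $\exp_x$ does not decrease distances. So the Euclidean lower bound $2r\sqrt{\tfrac{n+1}{2n}}$ on the spread of the $\psi(y_i)$ transfers to $M$ directly, and the Euclidean inequality $d(\psi(y_i),x) \ge r$ is exact along radial geodesics. For $\kappa > 0$, I would compare instead with the sphere of curvature $\kappa$. Toponogov/Rauch gives chord length at least the model spherical chord, and this is where the factor $\sin(\theta)/\theta$ with $\theta = \tfrac{\pi}{2}\sqrt{\kappa/(\kappa+1)}$ should come from. I expect the exact constant to arise from the choice of $\varepsilon_M$ relative to $\kappa$: the radii involved are bounded by a fixed multiple of $r < \varepsilon_M$.
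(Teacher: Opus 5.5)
The paper does not prove Corollary~B; it quotes it from \cite{AdamsFrickMajhiMcBride2023}. So I judge your argument on its own merits.

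\textbf{What is sound.} Your Jung-type lemma is true. If $\sum_i\lambda_i p_i=0$ with $\|p_i\|\ge r$, and $c$, $R$ are the circumcentre and circumradius of the $p_i$, then
\[
R^2\ge\sum_i\lambda_i\|p_i-c\|^2=\sum_i\lambda_i\|p_i\|^2+\|c\|^2\ge r^2,
\]
and Jung's theorem gives diameter at least $l_nR$. Step~3 is fine. The curvature step also works once all points lie within some $s$ of $x$ inside a convex ball: Rauch gives $d(\exp_x u,\exp_x v)\ge\frac{\sin(\sqrt\kappa s)}{\sqrt\kappa s}\|u-v\|$ for $\|u\|,\|v\|\le s$, and forcing $s\le\pi/(2\sqrt{\kappa+1})$ yields the stated factor.

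\textbf{The gap.} The whole topological step rests on the claim ``$\psi$ moves every point by at most about $r+\delta$'', and this is unjustified and false in general. A correspondence controls only differences of distances, never displacement, and $\dGH$ is an infimum over \emph{all} correspondences. For example, let $\Phi$ be an isometry of $M$, such as a half-period translation of a flat torus or a rotation of a round sphere. Then $\{(\Phi(y),a):(y,a)\in R_0\}$ has the same distortion as $R_0$, but its $\psi$ displaces points by roughly $d(y,\Phi(y))$, which can be of order $\operatorname{diam} M$. In that case $\psi(S)$ need not lie anywhere near the normal chart at $x$. The straight-line homotopy, the degree of $\psi|_S$ around $x$, and the averaging in normal coordinates at $x$ then have no meaning. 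On a manifold without isometries you still have no quantitative displacement bound to fall back on.

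\textbf{The missing idea: a global surjectivity argument.}
\begin{enumerate}
\item Smooth $\psi$ on all of $M$, not in a chart at $x$. Send the vertices of a fine triangulation (mesh $\rho$) by $\psi$ and extend by Riemannian centres of mass. This is legitimate because $\psi$-images of $\rho$-close points are $(\rho+2\delta+2\eta)$-close, far below the convexity radius once $\varepsilon_M$ is small. The result is a continuous $f\colon M\to M$.
\item Build $g\colon M\to M$ the same way from the reverse direction: take a point of $A$ within $r+\eta$ of $y$, then a partner of it in $M$ under the correspondence.
\item Check that $d(f(g(y)),y)\le C(r+\delta+\rho+\eta)$. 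For small $\varepsilon_M$ this makes $f\circ g$ homotopic to $\mathrm{id}_M$ along minimizing geodesics.
\item Since $M$ is closed and connected, $H_n(M;\mathbf{Z}/2)\cong\mathbf{Z}/2$ while $H_n(M\setminus\{x\};\mathbf{Z}/2)=0$. Hence $f$ must be onto.
\item Take $y^*$ with $f(y^*)=x$. The centre-of-mass equation at $x$ reads $\sum_j\lambda_j\exp_x^{-1}\psi(v_j)=0$, where the $v_j$ are the vertices of the simplex containing $y^*$.
\end{enumerate}
The last step gives exactly the surrounding configuration you wanted. The $\psi(v_j)$ automatically lie within about $2\delta\le 2r$ of $x$, hence in the chart. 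From there your Steps 1--3 and the comparison step finish the proof.
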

  A natural question to ask is whether these constants simply arise from the chosen arguments for the proof, or whether they are optimal. By constructing counterexamples, we show such optimality for Euclidean space and manifolds with nonpositive sectional curvature in the form of the following theorems:
  \begin{theorem*}[\ref{thm:MainTheoremRn}]
    \mainTheoremRnStatement
  \end{theorem*}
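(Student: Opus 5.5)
The lower bound $\dGH(\bbR^n, A) \geq \sqrt{\frac{n+1}{2n}}\,\dH(\bbR^n, A)$ is exactly \hyperlink{cor:euclidean-lower-bound}{Corollary A}. So it suffices to construct $A$ with $\dH(\bbR^n, A) = 1$ and to exhibit a correspondence between $\bbR^n$ and $A$ of distortion at most $2\sqrt{\frac{n+1}{2n}} = \sqrt{\frac{2(n+1)}{n}}$.

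I would take a single hole, $A := \bbR^n \setminus B_1(0)$ with $B_1(0)$ the open unit ball. Every point of $\bbR^n$ is within distance $1$ of $A$, and the origin has distance exactly $1$, so $\dH(\bbR^n, A) = 1$. The number $\sqrt{2(n+1)/n}$ is the edge length $s$ of a regular simplex inscribed in the unit sphere. This suggests collapsing the ball onto its vertices.

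Fix unit vectors $v_0, \dots, v_n$ with $\sum_j v_j = 0$ and $\langle v_i, v_j\rangle = -\frac1n$ for $i \neq j$, so that $|v_i - v_j| = s$. Define $f \colon \bbR^n \to A$ by $f = \mathrm{id}$ on $A$. For $x \in B_1(0)$, set $f(x) = v_i$, where $i$ maximizes $\langle x, v_j\rangle$, with ties broken arbitrarily. Write $C_i$ for the set of $x \in B_1(0)$ that are sent to $v_i$. Since $f$ is surjective, its graph is a correspondence. Hence $\dGH \leq \frac12 \operatorname{dis} f$, and I need $\bigl||f(x)-f(y)| - |x-y|\bigr| \leq s$ for all $x,y$. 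I would check this in four cases.

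\emph{Both points in $A$.} The distortion is zero.

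\emph{One point in the ball.} Take $x \in C_i$ and $y \in A$. The distortion is at most $|x - v_i|$. Since $\sum_j \langle x, v_j\rangle = 0$, the maximum satisfies $\langle x, v_i\rangle \geq 0$. Therefore $|x - v_i|^2 = |x|^2 + 1 - 2\langle x, v_i\rangle \leq 2 \leq s^2$.

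\emph{Both points in the ball, in different cells.} Here $|f(x)-f(y)| = s$ and $|x-y| \leq 2 \leq 2s$, so $\bigl||x-y| - s\bigr| \leq s$.

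\emph{Both points in the same cell $C_i$.} Here the distortion is $|x-y|$, so I need $\operatorname{diam} C_i \leq s$.

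I expect this last case to be the main obstacle. The closure of $C_i$ is the unit ball intersected with the simplicial cone $K_i = \{x : \langle x, v_i - v_j\rangle \geq 0 \ \forall j\}$. One checks that $K_i$ is generated by the rays $-v_j$, $j \neq i$. Indeed, $\langle -v_j, v_i\rangle = \frac1n = \langle -v_j, v_k\rangle$ for $k \neq i, j$, while $\langle -v_j, v_j\rangle = -1$, so each $-v_j$ lies on $n-1$ of the facets.

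These generators satisfy $\langle -v_j, -v_k\rangle = -\frac1n$, which gives distance exactly $s$, so the bound would be tight. To prove $|x-y| \leq s$ for $x,y \in \overline{B_1(0)} \cap K_i$, I would proceed in two steps:
\begin{itemize}
\item[(1)] Reduce to $|x| = |y| = 1$. For fixed $y$, the function $x \mapsto |x-y|^2$ is convex, so its maximum over the compact convex set is attained at an extreme point. Extreme points either lie on the sphere or are the apex $0$, and $|0 - y| \leq 1$.
\item[(2)] For unit vectors $x = \sum_{j \neq i} a_j(-v_j)$ and $y = \sum_{j\neq i} b_j(-v_j)$ with $a_j, b_j \geq 0$, show $\langle x, y\rangle \geq -\frac1n$.
\end{itemize}
For step (2), write $a = \sum_j a_j$ and $b = \sum_j b_j$. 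Compute $1 = |x|^2 = (1+\tfrac1n)\sum_j a_j^2 - \tfrac1n a^2$, and similarly for $y$. Also $\langle x, y\rangle = (1+\tfrac1n)\sum_j a_j b_j - \tfrac1n ab \geq -\tfrac1n ab$, using $a_j, b_j \geq 0$.

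So it remains to show $ab \leq 1$. From $\sum_j a_j^2 \leq a^2$ one gets $1 \leq a^2$, which points the wrong way. If $ab \leq 1$ does not hold, I would instead minimize $\langle x, y\rangle$ directly over the product of the two spherical simplices. For that I would use a Lagrange-multiplier or symmetry argument showing the minimum is attained at pairs of distinct vertices $-v_j, -v_k$.

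Putting the cases together gives $\operatorname{dis} f \leq s$. Hence $\dGH(\bbR^n, A) \leq \frac{s}{2} = \sqrt{\frac{n+1}{2n}}$, and equality follows from \hyperlink{cor:euclidean-lower-bound}{Corollary A}.
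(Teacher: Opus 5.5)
There is a genuine gap, and it sits exactly where you expected: the claim $\operatorname{diam} C_i \le s$ is false for every $n \ge 3$. So step (2) cannot be proved by any argument, including the Lagrange/symmetry fallback. The spherical simplex $S^{n-1}\cap K_i$ has edges of angle $\arccos(-1/n) > \pi/2$, and for such simplices the diameter is not attained at pairs of vertices.

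For $n=3$ and $i=0$, take $x=-v_1$ and $y=-(v_2+v_3)/\|v_2+v_3\|$; both are unit vectors in $K_0$. Since $\|v_2+v_3\|^2=4/3$, you get $\langle x,y\rangle=-1/\sqrt{3}<-1/3$, hence $\|x-y\|^2=2+2/\sqrt{3}\approx 3.15>8/3=s^2$. Push $x,y$ slightly into the interior of $K_0$ and then slightly into the open ball. Any tie-breaking rule sends the resulting two points to $v_0$, and their distance still exceeds $s$, so $\operatorname{dis} f>s$.

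The same happens in general. For a partition $\{1,\dots,n\}=P\sqcup Q$ with $|P|=p$, $|Q|=q$, the normalized vectors $-\sum_{j\in P}v_j$ and $-\sum_{j\in Q}v_j$ lie in $K_0$ and have inner product $-\sqrt{pq/((p+1)(q+1))}$. For even $n$ and $p=q=n/2$ this gives $(\operatorname{diam} C_0)^2\ge 4(n+1)/(n+2)$, which is $\frac{2n}{n+2}$ times $s^2$ and tends to $4$. Collapsing the whole unit ball onto the vertices therefore works only for $n\le 2$.

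The missing idea is to collapse only a smaller ball. The paper applies your Voronoi rule only on $B_{\sqrt{2}/2}(0)$. That ball has diameter $\sqrt{2}\le s$, which makes the same-cell case trivial. Each point of the annulus $\sqrt{2}/2\le\|x\|<1$ is sent radially to $x/\|x\|$. The cases then go as follows:
\begin{itemize}
\item Annulus pairs have distortion at most $2(1-\sqrt{2}/2)$.
\item All motions are at most $\sqrt{2}$, using your own estimate $\|x-v_i\|\le\sqrt{2}$.
\item The different-cell case inside the small ball goes through exactly as you wrote it.
\item The one genuinely new case is an inner point $x$ in the cell of $v_k$ against an annulus point $x'$. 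It needs a separate lemma, $\bigl|\|x-x'\|-\|v_k-x'/\|x'\|\|\bigr|\le\sqrt{2}$, which is proved from $\langle x,v_k\rangle\ge 0$, $\|x\|\le\sqrt{2}/2$ and $\|x'\|\ge\sqrt{2}/2$.
\end{itemize}
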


  For the case of Riemannian manifolds we can slightly alter this construction, giving in particular tightness of the bound from \hyperlink{cor:manifold-lower-bound}{Corollary~B} for the case of manifolds of nonpositive sectional curvature.

  \begin{theorem*}[\ref{thm:MainTheoremManifolds}]
    \mainTheoremManifoldsStatement
  \end{theorem*}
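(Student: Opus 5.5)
The plan is to transplant the Euclidean construction from Theorem~\ref{thm:MainTheoremRn} into a normal coordinate chart around $a$. We rescale it so that the removed region is a ball of radius $r$, and then control the error caused by curvature with the bi-Lipschitz property of the exponential map at small scales. Since $\dGH \le \tfrac12 \operatorname{dis}(\mathcal R)$ for any correspondence $\mathcal R$, it suffices to build a correspondence between $M$ and $M \setminus B_r(a)$ with distortion at most $2L\sqrt{\frac{m+1}{2m}}\, r$.

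\textbf{The Hausdorff equality.} Choose $r_0$ below the injectivity radius at $a$. Then $\exp_a$ is a diffeomorphism from the Euclidean $r_0$-ball onto $B_{r_0}(a)$, and $\partial B_r(a)$ is the nonempty geodesic sphere. Every $x \in B_r(a)$ lies on a radial geodesic from $a$ that leaves $B_r(a)$ within distance $\le r - d(a,x) \le r$. Conversely, the point $a$ has distance exactly $r$ from $M \setminus B_r(a)$. Together these give $\dH(M\setminus B_r(a), M) = r$.

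\textbf{Transporting the Euclidean correspondence.} I expect the construction behind Theorem~\ref{thm:MainTheoremRn} (with $A = \bbR^n \setminus B_1(0)$, or a similar set) to come with a correspondence $\mathcal R_0$ that is the identity outside some bounded ball $B_C(0)$ and has distortion $2\sqrt{\frac{m+1}{2m}}$. If the paper's construction is not of this local form, the first step is to modify it so that it is; the rigidity of the lower bound suggests the optimal correspondence only needs to move points near the hole.

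Fix $\rho > 0$ small and scale $\mathcal R_0$ by $r$. Push it forward through $\exp_a$ on $B_\rho(a)$, taking $r < \rho / C$, and extend it by the identity outside $B_\rho(a)$. On the ball $B_\rho(a)$, $\exp_a$ is $(1+\varepsilon(\rho))$-bi-Lipschitz with $\varepsilon(\rho) \to 0$. Because $B_\rho(a)$ is small and strongly convex, the metric of $M$ restricted to it is the chart metric up to this factor. Hence for pairs with both points in $B_\rho(a)$ the distortion is at most $(1+\varepsilon)^2 \cdot 2\sqrt{\frac{m+1}{2m}}\, r$, plus a term of order $\varepsilon \cdot Cr$ for pairs where only one point is moved. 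Choosing $\rho$ with $(1+\varepsilon)^2 + O(\varepsilon) \le L$ then fixes $r_0$.

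\textbf{The main obstacle: one point far away.} The difficult pairs are $(x, x')$ with $x \in B_{Cr}(a)$ moved, and a fixed point $y$ far outside $B_\rho(a)$. Using only $|d(y,x) - d(y,x')| \le d(x,x')$ is too lossy, because the displacement can exceed the target distortion.

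Instead, I would take a minimizing geodesic from $y$ to $x$ and let $z$ be its point on $\partial B_\rho(a)$. Then
\[
d(y,x') - d(y,x) \le d(z,x') - d(z,x),
\]
and the right-hand side is a pair handled inside the chart, since the correspondence fixes $z$. The reverse inequality follows in the same way from a geodesic from $y$ to $x'$. Thus every pair reduces to the chart case, and the distortion bound holds globally.

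The delicate point I would check carefully is that the Euclidean distortion bound really does cover pairs consisting of one fixed point at distance about $\rho \gg r$ and one moved point, uniformly in $\rho/r$. This should follow because in Theorem~\ref{thm:MainTheoremRn} the distortion bound is global on all of $\bbR^n$.
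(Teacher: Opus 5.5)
You have a genuine gap at the step where you transfer the Euclidean bound through the chart on the fixed ball $B_\rho(a)$. A $(1+\varepsilon)$-bilipschitz chart does not just rescale the distortion. It produces an \emph{additive} error proportional to the distances involved. Writing tildes for normal coordinates, Lemma~\ref{lem:uniformapproximationforboundedsetsandBilipschitzmaps} only gives $|d(p,p')-d(q,q')| \le K\bigl|\,|\tilde p-\tilde p'|-|\tilde q-\tilde q'|\,\bigr| + (K-K^{-1})\max\{|\tilde p-\tilde p'|,|\tilde q-\tilde q'|\}$.

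Take a fixed point $p'=q'=z$ with $d(a,z)\approx\rho$ and a moved point $p$ near $a$. Then the last term is of order $\varepsilon\rho$, not $\varepsilon Cr$. You fix $\rho$ and then let $r\to 0$, so this error cannot be absorbed into the slack $(L-1)\,l_m r$, where $l_m=2\sqrt{\frac{m+1}{2m}}$. Your geodesic-splitting step sends every far pair to exactly such a pair, with $z\in\partial B_\rho(a)$, so it does not close the argument.

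The ``delicate point'' you flag is the real issue, but the Euclidean bound being global does not resolve it. That bound is fine; what fails is carrying it through the chart at scale $\rho\gg r$. (Also, $M$ need not be complete, so minimizing geodesics from $y$ to $x$ may not exist; you would need almost-minimizing curves.)

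The paper avoids all of this, and your reason for discarding the triangle inequality is unfounded. The correspondence of Theorem~\ref{thm:MainTheoremRn} contains $(y,y)$ for every $y$ in the target set. Comparing $(x,y)$ with $(y,y)$ shows that its motion is automatically at most its distortion. Concretely, the motion is at most $\sqrt2<l_m$: annulus points move by at most $1-\frac{\sqrt2}{2}$, and inner points by at most $\sqrt2$ by Lemma~\ref{lem:MotionOfInnerCirclePoints}. Hence every pair with one fixed point satisfies $|d(y,x)-d(y,x')|\le d(x,x')\le K\sqrt2\,r\le L\,l_m r$. This is Lemma~\ref{lem:boundingcorrespondencesthroughmotionandnonfixpointcorrespondences}, which bounds the distortion by $\max\{\operatorname{Mot}(\tilde R),\operatorname{dis}(\tilde R)\}$.

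The paper pushes the correspondence forward only on $B_r(a)$, with no auxiliary $\rho$. It applies the bilipschitz comparison only to pairs with both points in $\bar B_r(a)$. There all distances are at most $2r$, so the error $(K-K^{-1})\,2r$ is absorbed by choosing $K$ with $Kl_m+2(K-K^{-1})\le Ll_m$.

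Alternatively, your splitting idea does work if you cut at the sphere of radius $Cr$ instead of radius $\rho$; the correspondence is already the identity there. Then $d(z,x),d(z,x')\le 2Cr$, and the chart error is $O(\varepsilon Cr)$. This version would not even need an a priori bound on the motion.
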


  Since in the case of general Riemannian manifolds and the particular case of the Euclidean space $\bbR^n$ we are potentially working with noncompact spaces and subsets, we start by introducing tools to bound the distortion of correspondences between such spaces.
  We then introduce a barycentric coordinate system for $\bbR^n$ as a more natural way of viewing Euclidean space for our purposes.

\section*{Disclaimer}
  In the generation of the latex code for this paper generative AI (specifically OpenAI's Codex) was used in limited capacity and controlled by the author for the formatting of the document and the generation of the illustrative graphic.
\section*{Acknowledgements}
  The author would like to thank Alexander Lytchak for suggesting the problem and related literature on the subject of bounding Gromov-Hausdorff distances.

\section{Preliminaries}
  
  \subsection{Correspondences}
    We define a correspondence between two metric spaces $X$ and $Y$ as a relation $R \subseteq X \times Y$ such that every element in either set stands in relation to an element of the other set.
    \\ For a relation between metric spaces we define
    \begin{definition}
      Let $X$ and $Y$ be two metric spaces and $R \subseteq X \times Y$ a relation between them. 
      \begin{enumerate}[(i)]
        \item We define the distortion of $R$ as
        $$ \dis(R) := \sup\left\{\left|d_X(x, x') - d_Y(y, y')\right| : (x, y), (x', y') \in R\right\} . $$
      \end{enumerate}
      Let now $Y$ be a subset of $X$ equipped with the subspace metric, and let $R \subseteq X \times Y$ be a nonempty relation.
      \begin{enumerate}[(i)] \setcounter{enumi}{1}
        \item We define the motion of $R$ as
        $$ \operatorname{Mot}(R) := \sup\left\{d_X(x, y) : (x, y) \in R\right\} . $$
        \item We define the fixed points of $R$ as
        $$ \operatorname{Fix}_X(R) := \left\{x \in X : R[x] = \{x\}\right\} . $$
        \item We define the essential part of $R$ as the relation
        $$ \tilde{R} := R \cap \left(\left(X \setminus \operatorname{Fix}_X(R)\right) \times Y\right) $$
      \end{enumerate}
    \end{definition}
    \begin{lemma} \label{lem:boundingcorrespondencesthroughmotionandnonfixpointcorrespondences}
      Let $X$ be a metric space and $Y \subsetneq X$ be a subset equipped with the subspace metric.
      \\ Let $R \subseteq X \times Y$ be a correspondence and let $\tilde{R}$ be its essential part.
      \\ Then $\tilde{R}$ is nonempty and we have
      $$ \dis(R) \leq \max\left\{\operatorname{Mot}(\tilde{R}), \dis\left(\tilde{R} \right) \right\}. $$
    \end{lemma}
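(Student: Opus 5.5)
Nonemptiness of $\tilde{R}$ comes first. Since $Y \subsetneq X$, pick $x_0 \in X \setminus Y$. As $R$ is a correspondence, $R[x_0]$ is nonempty, and any $y \in R[x_0]$ lies in $Y$, so $y \neq x_0$. Hence $R[x_0] \neq \{x_0\}$, so $x_0 \notin \operatorname{Fix}_X(R)$, and every pair $(x_0, y) \in R$ belongs to $\tilde{R}$.

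For the distortion bound, take arbitrary pairs $(x,y), (x',y') \in R$ and estimate $|d(x,x') - d(y,y')|$ by splitting into cases according to whether $x, x'$ lie in $\operatorname{Fix}_X(R)$. If $x \in \operatorname{Fix}_X(R)$, then $R[x] = \{x\}$, so $y = x$; the same holds for $x'$.
\begin{itemize}
  \item \emph{Both fixed.} Then $y = x$ and $y' = x'$, so the difference is $0 \le \max\{\operatorname{Mot}(\tilde R), \dis(\tilde R)\}$. Both quantities are nonnegative because $\tilde{R}$ is nonempty.
  \item \emph{Neither fixed.} Both pairs lie in $\tilde{R}$, so the difference is at most $\dis(\tilde R)$ by definition.
  \item \emph{Exactly one fixed,} say $x \in \operatorname{Fix}_X(R)$ and $x' \notin \operatorname{Fix}_X(R)$. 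Then $y = x$ and $(x', y') \in \tilde{R}$. The triangle inequality gives
  \[
    \bigl| d(x, x') - d(x, y') \bigr| \le d(x', y') \le \operatorname{Mot}(\tilde R).
  \]
  The symmetric case is identical.
\end{itemize}

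Taking the supremum over all pairs gives $\dis(R) \le \max\{\operatorname{Mot}(\tilde R), \dis(\tilde R)\}$. I expect no real obstacle. The only subtle points are that the nonemptiness of $\tilde{R}$ is needed for its motion and distortion to be meaningful suprema, and that it is the strict inclusion $Y \subsetneq X$ that makes $\tilde{R}$ nonempty. The key observation is that in the mixed case the "fixed" point is matched exactly, so the error is controlled by the motion of the other pair alone.
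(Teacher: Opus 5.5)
Your proof is correct and follows the paper's argument: the same three-way case split according to whether $x, x'$ lie in $\operatorname{Fix}_X(R)$, with the mixed case handled by the triangle inequality using $y = x$. You also prove that $\tilde{R}$ is nonempty via a point $x_0 \in X \setminus Y$, which the paper asserts in the statement but does not argue.
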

    \begin{proof}
      Consider pairs $(x, y), (x', y') \in R$.
      \\ \textbf{Case 1)} $R[x] = \{x\}$, $R[x'] = \{x'\}$.
      \\ Then $y = x$ and $y' = x'$ and therefore,
      $$ \left|d(x, x') - d(y, y')\right| = \left|d(x, x') - d(x, x')\right| = 0 . $$
      \textbf{Case 2)} $R[x] = \{x\}$ and $x' \in X \setminus \operatorname{Fix}_X(R)$.
      $$ \left|d(x, x') - d(y, y')\right| \leq d(x, y) + d(x', y') = d(x', y') \leq \operatorname{Mot}(\tilde{R}) . $$
      \\ The analogous argument works if the roles of $(x, y)$ and $(x', y')$ are reversed.
      \\ \textbf{Case 3)} Neither $R[x] = \{x\}$ nor $R[x'] = \{x'\}$
      \\ Then we have $(x, y), (x', y') \in \tilde{R}$, implying
      $$ \left|d(x, x') - d(y, y')\right| \leq \dis(\tilde{R}) . $$
    \end{proof}
  
  \subsection{Convention for the Gromov--Hausdorff distance for noncompact spaces}
    When working with noncompact spaces, one often compares pointed spaces with a notion of pointed Gromov--Hausdorff distance.
    \\ In this paper we do no such thing and use the standard two equivalent definitions:
    \begin{definition}
      Let $(X, d_X)$ and $(Y, d_Y)$ be two (possibly non-compact) metric spaces.
      \\ We define the Gromov--Hausdorff distance $\dGH((X, d_X), (Y, d_Y))$ equivalently via the two expressions
      $$ \inf\left\{\dis(R) : R \text{ is a correspondence between } X \text{ and } Y \right\} . $$
      and
      $$ \inf\left\{\dH^Z(X', Y') : Z \text{ metric space and } X', Y' \text{ images of isometric embeddings of } X, Y \text{ into } Z \right\} . $$
    \end{definition}

  \subsection{Bilipschitz approximations}
    \begin{notation}[Bilipschitz approximate equality]
      For two numbers $\alpha, \beta \in [0, \infty)$ and $L \geq 1$ we write
      $$ \alpha \overset{L}{\approx} \beta \quad \text{ if } \quad \frac{1}{L} \alpha \leq \beta \leq L \alpha . $$
    \end{notation}
    \begin{definition}\label{def:bilipschitz-map}
      For two metric spaces $X$ and $Y$, a map $f : X \to Y$ and a constant $L \geq 1$ we say that $f$ is an \emph{$L$-bilipschitz map} if
      $$ d_Y(f(x), f(x')) \overset{L}{\approx} d_X(x, x') \quad \text{ for all } x, x' \in X . $$
    \end{definition}
    \begin{lemma} \label{lem:uniformapproximationforboundedsetsandBilipschitzmaps}
      Let $\alpha, \beta \in [0, \infty)$ and $L > 1$.
      \\ For $\alpha', \beta' \in [0, \infty)$ with $\alpha \overset{L}{\approx} \alpha'$ and $\beta \overset{L}{\approx} \beta'$ we have
      $$ |\alpha' - \beta'| \leq L |\alpha - \beta| + (L - L^{-1}) \max\{\alpha, \beta\} . $$
    \end{lemma}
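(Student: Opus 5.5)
The plan is to reduce the claim to a comparison of $\alpha'$ with $\alpha$ and of $\beta'$ with $\beta$, and then apply the triangle inequality. By symmetry of the right-hand side in $(\alpha,\beta)$, I will assume without loss of generality that $\alpha' \geq \beta'$; otherwise I swap the roles of the two pairs. The chain I aim for is
$$ |\alpha' - \beta'| = \alpha' - \beta' \leq L\alpha - L^{-1}\beta , $$
which uses only $\alpha' \leq L\alpha$ and $\beta' \geq L^{-1}\beta$, both part of the hypotheses $\alpha \overset{L}{\approx} \alpha'$ and $\beta \overset{L}{\approx} \beta'$.

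Next I split $L\alpha - L^{-1}\beta$ into a difference term and an error term. I would write it in two ways:
$$ L\alpha - L^{-1}\beta = L(\alpha - \beta) + (L - L^{-1})\beta = L^{-1}(\alpha-\beta) + (L - L^{-1})\alpha . $$
If $\alpha \geq \beta$, the first form gives at most $L|\alpha-\beta| + (L-L^{-1})\max\{\alpha,\beta\}$, since $L - L^{-1} > 0$ and $\beta \leq \max\{\alpha,\beta\}$. If $\alpha < \beta$, the second form has a negative first summand, so the expression is at most $(L-L^{-1})\alpha \leq (L-L^{-1})\max\{\alpha,\beta\}$. This is again bounded by the claimed right-hand side, because $L|\alpha-\beta| \geq 0$.

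I do not expect a real obstacle. The only point needing care is to choose the right decomposition in each subcase so that the coefficient in front of $\max\{\alpha,\beta\}$ stays $L - L^{-1}$ and the signs are handled correctly. I also need to check that the WLOG swap is legitimate, which holds because both the hypotheses and the bound are symmetric under exchanging $(\alpha,\alpha')$ with $(\beta,\beta')$.
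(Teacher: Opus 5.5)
Your proof is correct and is essentially the paper's: the paper bounds $\alpha'-\beta' \le L\alpha - L^{-1}\beta = L(\alpha-\beta) + (L-L^{-1})\beta$, writes the symmetric bound for $\beta'-\alpha'$ explicitly, and concludes exactly as you do. Your subcase $\alpha<\beta$ with the second decomposition is valid but unnecessary, since $L(\alpha-\beta)\le L|\alpha-\beta|$ holds whatever the sign of $\alpha-\beta$.
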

    \begin{proof}
      $$ \alpha' - \beta' \leq L \alpha - L^{-1} \beta \leq L |\alpha - \beta| + (L - L^{-1}) \beta . $$
      $$ \beta' - \alpha' \leq L \beta - L^{-1} \alpha \leq L |\alpha - \beta| + (L - L^{-1}) \alpha . $$ 
    \end{proof}
  
  \subsection{Regular simplices and barycentric coordinates}
    \subsubsection{Definition}
      By Jung's theorem for $\bbR^n$ (see \cite[Theorem~2.6]{DanzerGrunbaumKlee1963}), every subset of $\bbR^n$ with circumradius $1$ has diameter at least
      $$ l_n := \sqrt{\frac{2(n+1)}{n}} . $$
      Equality implies that the closure of the set contains the vertices of a regular $n$-simplex of edge length $l_n$. In particular, up to isometry, the unique $(n+1)$-point set attaining equality is the vertex set $V_n := \left\{v_0, v_1, \ldots, v_n\right\}$ of a regular $n$-simplex, which for our purposes we choose to be inscribed into the unit sphere $S^{n-1} \subseteq \bbR^n$.
      We now fix a choice of such a set $V_n$ and establish properties for it.
      \begin{lemma}
        \begin{equation}
          \angles{v_i, v_j} = 
          \begin{cases}
            1 & \text{ for } i = j \\
            \frac{-1}{n} & \text{ for } i \neq j
          \end{cases}
        \end{equation}
      \end{lemma}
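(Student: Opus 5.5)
The plan is to use two facts. The vertices lie on the unit sphere, so $\angles{v_i,v_i}=\|v_i\|^2=1$ holds by construction. For $i\neq j$, the regular simplex of edge length $l_n$ gives $\|v_i-v_j\|^2 = l_n^2 = \frac{2(n+1)}{n}$. Expanding the square yields
$$ \|v_i - v_j\|^2 = \|v_i\|^2 + \|v_j\|^2 - 2\angles{v_i,v_j} = 2 - 2\angles{v_i,v_j}, $$
and hence
$$ \angles{v_i,v_j} = 1 - \frac{n+1}{n} = \frac{-1}{n}. $$

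The only point needing justification is that the simplex inscribed in the unit sphere really has edge length $l_n$. The earlier statement only says that equality in Jung's theorem is attained by a regular simplex with circumradius $1$. Should one prefer not to rely on that, I would derive the edge length independently by a centroid argument.

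By symmetry, the centroid of a regular simplex is its circumcenter. Since $V_n$ is inscribed in $S^{n-1}$ with circumcenter at the origin, this gives $\sum_{k=0}^n v_k = 0$.

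All off-diagonal inner products are equal to a common value $c$, because all edges have the same length and all norms equal $1$. Taking the inner product of $\sum_k v_k = 0$ with $v_i$ gives $1 + nc = 0$, so $c = -1/n$. This also recovers $l_n^2 = 2 + 2/n$ as a consistency check.

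I expect no real obstacle. The mildest step is justifying that the circumcenter, i.e.\ the origin, coincides with the centroid. This follows because the centroid is the unique point equidistant from all vertices within the affine hull, and the affine hull is all of $\bbR^n$ since the $n+1$ vertices are affinely independent.
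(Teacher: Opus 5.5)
Your proof is correct, but your main route differs from the paper's. You take the edge length $l_n$ of $V_n$ from the Jung's-theorem setup and read off $\langle v_i, v_j\rangle = 1 - \tfrac{1}{2} l_n^2 = -\tfrac{1}{n}$ by expanding $\|v_i - v_j\|^2$.

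The paper never uses the numerical value of $l_n$. It uses the linear isometries of $\bbR^n$ that permute $V_n$ to show that all off-diagonal inner products equal a common $\alpha$. It then pins down $\alpha$ from $\sum_i v_i = 0$ via $0 = \|\sum_i v_i\|^2 = (n+1) + n(n+1)\alpha$.

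Your fallback argument is essentially the paper's, with two cosmetic changes. You obtain the common off-diagonal value from equal edge lengths and unit norms (polarization) instead of from the symmetry group. You also pair $\sum_k v_k = 0$ with a single $v_i$ instead of squaring the sum.

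The trade-off is this. Your direct computation is a one-liner, but it is only as self-contained as the imported fact that a regular $n$-simplex of circumradius $1$ has edge length exactly $\sqrt{2(n+1)/n}$. The paper's route needs only regularity and centering at the origin. It yields $l_n^2 = 2 + \tfrac{2}{n}$ as a consequence, so it checks the constant rather than assuming it.

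One small wording point in your last paragraph. Uniqueness of an equidistant point in the affine hull characterizes the circumcenter. That the centroid is equidistant from all vertices is a separate step, and it is where regularity enters: $\|\tfrac{1}{n+1}\sum_k v_k - v_i\|$ depends only on pairwise distances, hence not on $i$. The two facts together give $\sum_k v_k = 0$.
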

      \begin{proof}
        The linear isometries of the surrounding space $\bbR^n$ contain a subgroup of isometries restricting to the permutations of $V_n$.
        \\ In particular, this gives us that there exists an $\alpha \in \bbR$ such that 
        $$ \angles{v_i, v_j} = 
          \begin{cases}
            1 & \text{ for } i = j \\
            \alpha & \text{ for } i \neq j
          \end{cases} $$
        and using the transitivity of isometries, we see that
        $$ 0 = \angles{0, 0} = \angles{\sum_{i = 0}^n v_i, \sum_{i = 0}^n v_i} = \sum_{i = 0}^n \angles{v_i, v_i} + \sum_{\substack{i, j = 0 \\ i \neq j}}^n \angles{v_i, v_j} = (n+1) + n(n+1) \alpha $$
        implying that $\alpha = \frac{-1}{n}$.
      \end{proof}
    \subsubsection{Barycentric coordinates}
      \begin{definition}
        The $n+1$ vectors in $V_n$ give us a barycentric coordinate system for $\bbR^n$, meaning that every vector $x \in \bbR^n$ can be uniquely written as a linear combination
        \begin{equation}
          x = \sum_{i = 0}^n x_i v_i \quad \text{for } \quad x_0, \ldots, x_n \in \bbR \quad \text{ with } \quad \sum_{i = 0}^n x_i = 1 .
        \end{equation}
      \end{definition}
      Note that throughout this paper, we never use Euclidean coordinates and always abbreviate for a point $x \in \bbR^n$ its affine coordinates as $x_0, \ldots, x_n$.
      \begin{lemma}
        Let $x \in \bbR^n$.
        The affine coefficients follow the relation
        \begin{equation}
          \angles{x, v_k} = \frac{n+1}{n} x_k - \frac{1}{n} \qquad \text{i.e.} \qquad x_k = \frac{n \angles{x, v_k} + 1}{n+1} .
        \end{equation}
      \end{lemma}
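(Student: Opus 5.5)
The plan is to take the inner product of the barycentric representation with $v_k$ and use the Gram relations from the preceding lemma. First I write $x = \sum_{i=0}^n x_i v_i$ with $\sum_{i=0}^n x_i = 1$. Then, by bilinearity,
$$ \angles{x, v_k} = \sum_{i=0}^n x_i \angles{v_i, v_k} = x_k \cdot 1 + \sum_{i \neq k} x_i \cdot \left(\frac{-1}{n}\right) . $$

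The only step that needs care is eliminating the sum over $i \neq k$. I do this with the affine constraint $\sum_{i=0}^n x_i = 1$, which gives $\sum_{i \neq k} x_i = 1 - x_k$. Substituting yields
$$ \angles{x, v_k} = x_k - \frac{1 - x_k}{n} = \frac{n+1}{n} x_k - \frac{1}{n} , $$
which is the first form of the claimed relation.

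The second form then follows by multiplying through by $n$ and solving for $x_k$:
$$ n\angles{x, v_k} + 1 = (n+1) x_k \quad \Longrightarrow \quad x_k = \frac{n\angles{x, v_k} + 1}{n+1} . $$

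The computation uses only the Gram matrix $\angles{v_i, v_j}$ from the preceding lemma. It does not rely on uniqueness of the coordinates. Uniqueness is nevertheless visible from this formula, since each $x_k$ is determined by $x$ alone, so the derivation is consistent with the definition of the coordinate system. There is no real obstacle here. The one point to watch is using the constraint $\sum_i x_i = 1$ rather than some other normalization when rewriting the off-diagonal sum.
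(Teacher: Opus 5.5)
Your proof is correct and matches the paper's argument: pair the barycentric expansion with $v_k$, apply the Gram relations $\angles{v_i, v_k} = -\frac{1}{n}$ for $i \neq k$, and substitute $\sum_{i \neq k} x_i = 1 - x_k$. Your explicit rearrangement to the second form, and your remark that the formula forces uniqueness of the coordinates, are correct small additions.
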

      \begin{proof}
        $$ 
          \angles{x, v_k} = \sum_{i = 0}^n x_i \angles{v_i, v_k} = x_k - \frac{1}{n} \sum_{\substack{i = 0 \\ i \neq k}}^n x_i = x_k - \frac{1}{n}(1 - x_k) = \frac{n+1}{n} x_k - \frac{1}{n} .  
        $$
      \end{proof}
    \subsubsection{A Voronoi partitioning}
      We now divide the space $\bbR^n$ into $n+1$ closed Voronoi cells $P_k$ around the points $v_i$, namely
      \[
        P_k := \left\{x \in \bbR^n : \|x - v_k\| = \min_{i = 0, \ldots, n} \|x - v_i\|\right\} .
      \]
      As a special case for $r \in (0, \infty]$ we partition $B_r(0)$ into cells $P_k^{r} := P_k \cap B_r(0)$.
      \\ In affine coordinates we can express these cells as follows:
      \begin{lemma} \label{lem:VoronoiPartitionInAffineCoordinates}
        For $k = 0, 1, \ldots, n$ we have
        \begin{equation}
          P_k = \left\{\sum_{i = 0}^n x_i v_i : \sum_{i = 0}^n x_i = 1, \text{ and } x_k = \max_{i = 0, \ldots, n} x_i \right\} .
        \end{equation}
        and if $x \in P_k$, we have $x_k \geq \frac{1}{n+1}$ and $\angles{x, v_k} \geq 0$.
      \end{lemma}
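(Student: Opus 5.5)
The plan is to reduce membership in $P_k$ to a comparison of inner products with the $v_i$, and then translate those into affine coordinates using the preceding lemma.

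First, since every $v_i$ lies on the unit sphere, $\|v_i\| = 1$ for all $i$. Expanding the squared norm gives
\[
\|x - v_i\|^2 = \|x\|^2 - 2\angles{x, v_i} + 1 .
\]
Hence minimizing $\|x - v_i\|$ over $i$ is the same as maximizing $\angles{x, v_i}$ over $i$. So $x \in P_k$ if and only if $\angles{x, v_k} = \max_i \angles{x, v_i}$.

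Next, write $x$ in barycentric coordinates $x = \sum_i x_i v_i$ with $\sum_i x_i = 1$; these coordinates exist and are unique by the definition. The previous lemma gives
\[
\angles{x, v_i} = \frac{n+1}{n} x_i - \frac{1}{n}.
\]
This is a strictly increasing affine function of $x_i$ with the same coefficients for every $i$. Therefore $\angles{x, v_k}$ is maximal among the $\angles{x, v_i}$ exactly when $x_k = \max_i x_i$. This proves the set equality in both directions at once.

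For the two inequalities, let $x \in P_k$. Then $x_k$ is the largest of $n+1$ numbers summing to $1$, so
\[
(n+1)x_k \geq \sum_i x_i = 1, \qquad\text{that is,}\qquad x_k \geq \tfrac{1}{n+1}.
\]
Substituting into the formula $\angles{x, v_k} = \frac{n+1}{n}x_k - \frac1n$ gives $\angles{x, v_k} \geq \frac{1}{n} - \frac{1}{n} = 0$.

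There is no real obstacle here. The only point needing care is that the monotone relation between $\angles{x, v_i}$ and $x_i$ has identical coefficients for all $i$. That holds because the $v_i$ all have unit norm and pairwise inner product $-1/n$.
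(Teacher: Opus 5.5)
Your proof is correct and follows the paper's argument: expand $\|x - v_i\|^2$ to reduce to maximizing $\langle x, v_i\rangle$, then use the strictly increasing affine relation with $x_i$. The only difference is cosmetic: the paper gets $\langle x, v_k\rangle \geq 0$ directly from $\sum_i \langle x, v_i\rangle = 0$ (a maximum is at least the average), whereas you substitute $x_k \geq \frac{1}{n+1}$ into the affine formula, and both steps are immediate.
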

      \begin{proof}
        $$ \|x - v_k\|^2 = \|x\|^2 + \|v_k\|^2 - 2\angles{x,v_k}
        = \|x\|^2 + 1 - 2\angles{x,v_k} . $$
        Thus, $x \in P_k$ if and only if $\angles{x,v_k}$ is maximal among all values $\angles{x,v_i}$.
        Since $\angles{x,v_i} = \frac{n+1}{n}x_i - \frac{1}{n}$ is strictly increasing as a function of $x_i$, this is equivalent to $x_k$ being maximal among all $x_i$, giving us the desired description of the Voronoi cells.
        \\ We conclude the lower bounds based on the sum formulas
        $$ \sum_{i = 0}^n x_i = 1 \quad \text{ and } \quad \sum_{i = 0}^n \angles{x, v_i} = 0 . $$
      \end{proof}
  
  \subsection{Riemannian manifolds}
      For the definition of a Riemannian manifold, see \cite{doCarmo1992}.
      \\ For a connected Riemannian manifold $(M, g)$ we denote the induced Riemannian distance function by $d_g : M \times M \to [0, \infty)$, or simply $d$ for short.
      
      For a Riemannian manifold $(M, g)$ and a point $a \in M$ we denote the exponential map at $a$ by 
      $$ \exp_a : \mathcal{D}_a \subseteq T_a M \to M . $$
      where $\mathcal{D}_a$ is its domain at $a$ (equipped with the Euclidean metric induced by $g_a$).
      \\ We note that its restriction to a ball $B_r(0_{T_a M}) \subseteq \mathcal{D}_a$ for sufficiently small $r > 0$ is a diffeomorphism onto its image $B_r(a) \subseteq M$. Its inverse is called the normal chart $\operatorname{norm}_a : B_r(a) \to B_r(0_{T_a M})$.
      \\ The maps preserve distance to the origin in a sufficiently small neighborhood around $a$, by which we mean that for sufficiently small $r > 0$,
      $$ d_{g_a}(0_{T_a M}, v) = d_g(a, \exp_a(v)) \quad \text{ for } v \in B_{r}(0_{T_a M}) \subseteq \mathcal{D}_a. $$
      We also note that for every $L > 1$, there exists a sufficiently small $r := r(a, L) > 0$ such that both 
      $$ \exp_a : \bar{B}_{r}(0_{T_a M}) \to \bar{B}_{r}(a) \quad \text{ and } \quad \operatorname{norm}_a : \bar{B}_{r}(a) \to \bar{B}_{r}(0_{T_a M}) $$
      are $L$-bilipschitz maps. This is a direct consequence of the fact that the differential of the exponential map at $0_{T_a M}$ is the identity map on $T_a M$.

\section{Statement and Proof}
  \subsection{The case of Euclidean space}
      \begin{restatable}{theorem}{mainTheoremRn}\label{thm:MainTheoremRn}
        \mainTheoremRnStatement
      \end{restatable}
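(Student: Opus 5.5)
The plan is to take $A := \bbR^n \setminus B_1(0)$, with $B_1(0)$ the open unit ball, and prove the two inequalities for $\dGH$ separately. First, $\dH(\bbR^n, A) = 1$ is immediate: every point of $\bar B_1(0)$ is within distance $1$ of $S^{n-1} \subseteq A$, and the origin has distance exactly $1$ to $A$. Given this, \hyperlink{cor:euclidean-lower-bound}{Corollary~A} yields $\dGH(\bbR^n, A) \geq \sqrt{\frac{n+1}{2n}}$. It therefore remains to construct a correspondence $R \subseteq \bbR^n \times A$ with $\dis(R) \leq \sqrt{\frac{n+1}{2n}} = \frac{l_n}{2}$, where $l_n$ is the Jung diameter from the preliminaries. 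Heuristically this is the right constant: the origin must be matched with points of $A$ that play the role of the $n+1$ vertices $v_0, \ldots, v_n$, and these vertices are pairwise at distance $l_n$.

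To build $R$ I will use the Voronoi partition $P_0, \ldots, P_n$ together with \cref{lem:VoronoiPartitionInAffineCoordinates}. On each cell I define
$$ f_k(x) := x + \bigl(1 - \angles{x, v_k}\bigr)_+ \, v_k \qquad \text{for } x \in P_k . $$
Because $\angles{x, v_k} \geq 0$ on $P_k$, we get $\|f_k(x)\| \geq \angles{f_k(x), v_k} \geq 1$, so $f_k(x) \in A$. Points with $\angles{x, v_k} \geq 1$ are left unchanged. I then take $R$ to be the union of the graphs of the $f_k$ (points on cell boundaries are related to every admissible image) together with the pairs $(a, a)$ for $a \in A$, so that $R$ is a correspondence. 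Only the slabs $\{0 \leq \angles{x, v_k} < 1\} \cap P_k$ move, and the motion there is $1 - \angles{x, v_k} \leq 1$. Since this motion can exceed $\frac{l_n}{2}$, \cref{lem:boundingcorrespondencesthroughmotionandnonfixpointcorrespondences} alone will not suffice; the main use of that lemma is to discard the fixed region.

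The core work is the distortion estimate, which I split into three cases. (a) Both points lie in the same cell. In the slab, $f_k$ is the orthogonal projection onto $v_k^\perp$ followed by a translation by $v_k$. The discrepancy $\|x - y\| - \|f(x) - f(y)\|$ is controlled by $|\angles{x - y, v_k}| \leq 1$, and the projection structure together with the cell geometry should sharpen this bound to $\frac{l_n}{2}$. (b) The points lie in different cells $P_k$ and $P_j$. Here I will expand $\|f(x) - f(y)\|^2$ using $\angles{v_k, v_j} = -\frac{1}{n}$ and the affine-coordinate description $x_k = \max_i x_i$, and compare the result with $\|x - y\|^2$. The worst case should be $x = y = 0$ with images $v_k$ and $v_j$, at distance $l_n$. (c) One point is moved and the other is a fixed pair $(a, a)$; this is treated with the triangle-type argument from the proof of \cref{lem:boundingcorrespondencesthroughmotionandnonfixpointcorrespondences}.

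I expect the main obstacle to be that the naive bounds in cases (a) and (b) give roughly $1$ or $l_n$, not $\frac{l_n}{2}$. Two adjustments may be needed. First, the natural distortion comparison is two-sided, and the $x = y = 0$ configuration shows that the raw discrepancy can reach $l_n$. If that is what the bound turns out to be, then what one actually gets is $\dGH \leq \frac{1}{2}\dis(R)$ under a factor-$\frac12$ convention. I would then check which convention for $\dGH$ the lower bound of Corollary~A is calibrated to, and state the upper bound as $\frac{1}{2}\dis(R) \leq \frac{l_n}{2}$ accordingly. Second, if the same-cell estimate fails, I would replace the linear push $(1 - \angles{x, v_k})_+$ by a map that sends $P_k \cap \bar B_1(0)$ onto the spherical cap around $v_k$ cut out by $P_k$. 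Its distortion can then be bounded via Jung's theorem applied to the caps, using the fact that each cap has circumradius at most $\frac{l_n}{2}$ relative to the matching scale.
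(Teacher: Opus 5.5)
Your correspondence is valid, and once one calibration error is fixed it gives a complete proof by a different and shorter route than the paper's.

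\textbf{The calibration error.} With the normalization $\dGH \leq \frac{1}{2}\dis(R)$ (the one Corollary~A is calibrated to, and the one the paper uses in its last step), you need $\dis(R) \leq l_n$, not $\dis(R) \leq \frac{l_n}{2}$. The latter is impossible for your $R$. The origin lies in every $P_k$ and is related to every $v_k$, so $\dis(R) \geq \|v_k - v_j\| = l_n$. In particular, the sharpening you announce in case (a) is false. The points $x = 0$ and $x' = v_k$ both lie in $P_k$ and are both sent to $v_k$, so the discrepancy is $1 > \frac{l_n}{2}$ for $n \geq 2$. Your concern that the motion $1$ exceeds $\frac{l_n}{2}$ is moot, since $1 \leq l_n$. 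The cap-map fallback is also unnecessary.

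\textbf{Closing the argument.} With the correct target, no case distinction is needed. For $(x,y), (x',y') \in R$ the reverse triangle inequality gives
\[
  \bigl| \|y - y'\| - \|x - x'\| \bigr| \leq \bigl\| (y - x) - (y' - x') \bigr\| .
\]
Every displacement $y - x$ occurring in $R$ lies in $D := \{\alpha v_k : \alpha \in [0,1],\ k = 0, \ldots, n\}$. For a pair $(x, f_k(x))$ the displacement is $(1 - \angles{x, v_k})_+ v_k$, whose coefficient lies in $[0,1]$ because $\angles{x, v_k} \geq 0$ on $P_k$. For a pair $(a,a)$ it is $0$.

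For $k \neq j$ we have $\|\alpha v_k - \beta v_j\|^2 = \alpha^2 + \beta^2 + \frac{2\alpha\beta}{n} \leq 2 + \frac{2}{n} = l_n^2$. For $k = j$ the difference has norm $|\alpha - \beta| \leq 1$. Hence $\dis(R) \leq \diam D = l_n$, so $\dGH(\bbR^n, A) \leq \frac{l_n}{2} = \sqrt{\frac{n+1}{2n}}$. Together with your first paragraph ($\dH = 1$ and Corollary~A), this proves the theorem.

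\textbf{Comparison with the paper.} The paper fixes all of $A$, projects the annulus $\frac{\sqrt{2}}{2} \leq \|x\| < 1$ radially onto $S^{n-1}$, and collapses $B_{\sqrt{2}/2}(0)$ onto the vertices. Those displacements point in all directions, so the paper needs \cref{lem:MotionOfInnerCirclePoints}, \cref{lem:DistortionForAnnulusAndInnerCircle} (where the radius $\frac{\sqrt{2}}{2}$ is tuned) and a four-case analysis.

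Your map instead pushes the circumscribed simplex $\{x : \angles{x, v_i} \leq 1 \text{ for all } i\}$ onto its boundary along the $v_k$. It also moves some points of $A$, since the corners of that simplex reach out to $-n v_i$. But every displacement lies in $D$, which has circumradius $1$ and diameter $l_n$, i.e.\ attains equality in Jung's theorem. That is why the distortion bound becomes a single inequality.

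The paper's correspondence has one advantage: it is supported in the unit ball, which is convenient when it is rescaled and transplanted via $\exp_a$ in \cref{thm:MainTheoremManifolds}. Yours would transplant as well, but it needs the chart on $\bar{B}_{nr}(a)$ and picks up a larger bilipschitz error term. That term still vanishes as $K \to 1$.
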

      
      We first prove two estimates that will be used to bound the distortion of the correspondence constructed in the proof.

      \begin{lemma} \label{lem:MotionOfInnerCirclePoints}
        For $x \in P_k^{1}$ we have
        $$ \|x - v_k\| \leq \sqrt{2} . $$
      \end{lemma}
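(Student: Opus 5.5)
We expand the squared distance. Since $\|v_k\| = 1$, we have
$$ \|x - v_k\|^2 = \|x\|^2 + 1 - 2\angles{x, v_k} . $$
The plan is therefore to bound the two $x$-dependent terms separately, using only the two facts available for $x \in P_k^{1}$: that $x \in B_1(0)$ and that $x$ lies in the Voronoi cell $P_k$.

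First, membership in $P_k^{1} = P_k \cap B_1(0)$ gives $\|x\| \leq 1$, whether the ball is taken open or closed. Hence $\|x\|^2 \leq 1$.

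Second, \Cref{lem:VoronoiPartitionInAffineCoordinates} gives $\angles{x, v_k} \geq 0$ for every $x \in P_k$. Recall why: $\angles{x, v_k}$ is the maximum of the values $\angles{x, v_i}$, and these values sum to $\angles{x, \sum_i v_i} = 0$, so their maximum cannot be negative. Substituting both bounds into the expansion yields
$$ \|x - v_k\|^2 \leq 1 + 1 - 0 = 2 , $$
and taking square roots proves the claim.

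Every step is a direct application of an earlier lemma, so I expect no real obstacle. The only point to check is that the sign condition $\angles{x, v_k} \geq 0$ is exactly what \Cref{lem:VoronoiPartitionInAffineCoordinates} provides, and it is. The bound is attained when $\|x\| = 1$ and $x \perp v_k$ with $x$ still in $P_k$; this suggests $\sqrt{2}$ is the natural motion bound used later with \Cref{lem:boundingcorrespondencesthroughmotionandnonfixpointcorrespondences}.
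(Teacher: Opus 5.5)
Your proof is correct and is the paper's own argument: expand $\|x - v_k\|^2 = \|x\|^2 + 1 - 2\langle x, v_k\rangle$, then use $\|x\|^2 \leq 1$ together with $\langle x, v_k\rangle \geq 0$ from Lemma~\ref{lem:VoronoiPartitionInAffineCoordinates}. Your closing aside is wrong, though: the bound is never attained, because your sum-zero argument shows that $x \in P_k$ with $\langle x, v_k\rangle = 0$ forces all $\langle x, v_i\rangle = 0$, hence $x = 0$ (and in any case $\|x\| < 1$ on the open ball), so in fact $\|x - v_k\| < \sqrt{2}$ on all of $P_k^1$.
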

      \begin{proof}
        As $x \in P_k$ implies that $\angles{x, v_k} \geq 0$ we have:
        $$ \|x - v_k\|^2 = \underbrace{\|x\|^2}_{\leq 1} + \underbrace{\|v_k\|^2}_{= 1} - 2 \underbrace{\angles{x, v_k}}_{\geq 0} \leq 2 . $$
      \end{proof}
  
      \begin{lemma} \label{lem:DistortionForAnnulusAndInnerCircle}
        For $k = 0, \ldots, n$ and points $x \in P_k^{\sfrac{\sqrt{2}}{2}}$ and $x' \in \bar{B}_1(0) \setminus B_{\sfrac{\sqrt{2}}{2}}(0)$ we have:
        \begin{equation}
          \left|\left\|x - x'\right\| - \left\| v_k - \frac{x'}{\|x'\|}\right\|\right| \leq \sqrt{2} . 
        \end{equation}
      \end{lemma}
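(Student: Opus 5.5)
The plan is to split the absolute value into its two one-sided inequalities and to prove each one separately with elementary vector estimates. Throughout, write $t := \|x'\| \in [\sfrac{\sqrt{2}}{2}, 1]$, $u := \sfrac{x'}{t} \in S^{n-1}$ and $s := \|x - x'\|$. The only property of the Voronoi cell we need is the one from Lemma~\ref{lem:VoronoiPartitionInAffineCoordinates}: $x \in P_k$ implies $\angles{x, v_k} \geq 0$.

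\textbf{Step 1: $s \leq \|v_k - u\| + \sqrt{2}$.} We compare $x'$ with the scaled vertex $t v_k$ rather than with $v_k$ itself. The triangle inequality gives
$$ s \leq \|x - t v_k\| + \|t v_k - t u\| = \|x - t v_k\| + t\|v_k - u\| \leq \|x - t v_k\| + \|v_k - u\| , $$
using $t \leq 1$. Expanding the first term, and using $\angles{x, v_k} \geq 0$, $\|x\|^2 \leq \sfrac12$ and $t^2 \leq 1$, we get
$$ \|x - t v_k\|^2 = \|x\|^2 + t^2 - 2t\angles{x, v_k} \leq \tfrac{3}{2} \leq 2 . $$
This settles the first direction. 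Note that the naive route through $v_k$ and $u$, with the error term $1-t$, loses too much.

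\textbf{Step 2: $\|v_k - u\| \leq s + \sqrt{2}$.} A plain triangle inequality again loses too much, so we work with squared norms. Since $\|v_k\| = \|u\| = 1$,
$$ \|v_k - u\|^2 = 2 - 2\angles{v_k, u} , $$
so it suffices to bound $\angles{v_k, u}$ from below. We have
$$ \angles{v_k, x'} = \angles{v_k, x} + \angles{v_k, x' - x} \geq 0 - s , $$
and dividing by $t \geq \sfrac{\sqrt{2}}{2}$ yields $\angles{v_k, u} \geq -\sqrt{2}\, s$. Hence
$$ \|v_k - u\|^2 \leq 2 + 2\sqrt{2}\, s \leq s^2 + 2\sqrt{2}\, s + 2 = (s + \sqrt{2})^2 , $$
which is the second direction.

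The two steps together give the claimed bound of $\sqrt{2}$. The main obstacle is Step~2. The factor $\sfrac{1}{t}$, which can be as large as $\sqrt{2}$, spoils any linear triangle-inequality argument. The fix is to pass to the inner product: the lower bound $t \geq \sfrac{\sqrt{2}}{2}$ and the cell condition $\angles{x, v_k} \geq 0$ then enter exactly in the form needed to complete the square.
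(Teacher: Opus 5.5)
Your proof is correct and is essentially the paper's argument. Step~1 is exactly the paper's comparison through the scaled vertex $\|x'\| v_k$. Step~2 rests on the same key estimate $\|x - x'\| \geq \langle v_k, x - x'\rangle \geq -\langle v_k, x'\rangle$ (from $\langle x, v_k\rangle \geq 0$), followed by division by $\|x'\| \geq \frac{\sqrt{2}}{2}$. The only difference is cosmetic: the paper splits on the sign of $t = -\langle v_k, x'/\|x'\|\rangle$ and shows that $H(t) = \sqrt{2+2t} - \frac{\sqrt{2}}{2}t$ is decreasing, whereas you square and complete the square, which handles both signs at once.
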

      \begin{proof}
        We define
        \begin{align*}
          F(x, x') & = \left\|x - x'\right\| ,
          \\ G(x, x') = G(x') & = \left\|v_k - \frac{x'}{\|x'\|}\right\| .
        \end{align*}
        Within the specified domains we now show that
        $$ \left|F(x, x') - G(x')\right| \leq \sqrt{2} . $$ 

        Write $\rho := \|x'\| \in \left[\frac{\sqrt{2}}{2}, 1\right]$ and $y' := \frac{x'}{\|x'\|}$.
        \\ As we proved in \cref{lem:VoronoiPartitionInAffineCoordinates}, $x \in P_k^{\sfrac{\sqrt{2}}{2}}$ implies that $\angles{x, v_k} \geq 0$.
        
        \textbf{Part 1)} $F - G \leq \sqrt{2}$
        
        Using $\rho^2 \leq 1$ and $\|x\|^2 \leq \frac{1}{2}$, we obtain
        \begin{flalign*}
          & F(x, x') - G(x')
          \\ = & \|x - \rho y'\| - \|v_k - y'\|
          \\ \leq & \|x - \rho y'\| - \rho\|v_k - y'\|
          \\ \leq & \|x - \rho y'\| - \|\rho v_k - \rho y'\|
          \\ \leq & \|x - \rho v_k\|
          \\ = & \sqrt{\|x\|^2 + \rho^2 - 2\rho\angles{x, v_k}}
          \\ \leq & \sqrt{\frac{1}{2} + 1}
          \\ \leq & \sqrt{2} .
        \end{flalign*}
        
        \textbf{Part 2)} $G - F \leq \sqrt{2}$:
        \\ For the reverse difference, put $t := \angles{v_k, -y'}$ and note that $t \in [-1, 1]$. 
        \\ Generally we have

        $$ G(x') = \sqrt{\|v_k - y'\|^2} = \sqrt{\|v_k\|^2 + \|y'\|^2 - 2 \angles{v_k, y'}} = \sqrt{2 + 2t} . $$

        \textbf{Case 1)} $t \leq 0$
        \\ In this case, the inequality follows from
        $$ G(x') - F(x, x') \leq G(x') = \sqrt{2 + 2t} \leq \sqrt{2} . $$
        
        \textbf{Case 2)} $t > 0$
        \\ In this case, using that $\rho \geq \frac{\sqrt{2}}{2}$, we have
        $$ F(x, x') = \|x - \rho y'\| \geq \angles{x - \rho y', v_k} = \underbrace{\angles{x, v_k}}_{\geq 0} + \underbrace{\rho\angles{-y', v_k}}_{= \rho t} \geq \rho t \geq \frac{\sqrt{2}}{2}t. $$
        From this we conclude 
        $$ G(x') - F(x, x') \leq \sqrt{2 + 2t} - \frac{\sqrt{2}}{2} t =: H(t) . $$
        We see that $H(0) = \sqrt{2}$ and for $t \in [0, 1]$ we have
        $$ H'(t) = \frac{1}{\sqrt{2 + 2t}} - \frac{\sqrt{2}}{2} = \frac{1 - \sqrt{1+t}}{\sqrt{2 + 2t}} \leq 0 . $$
        From this we can conclude that $H(t) \leq H(0) = \sqrt{2}$ on $[0, 1]$, proving the desired inequality.
      \end{proof}
      \begin{proof}[Proof of \cref{thm:MainTheoremRn}]
        Set $A^n := \bbR^n \setminus B_1(0)$. Every point $x \in B_1(0)$ has distance $1 - \|x\|$ from $A^n$, so $\dH(\bbR^n, A^n) = 1$.
        We define a relation $R \subseteq \bbR^n \times A^n$ by
        $$ R[x] =
          \begin{cases}
            \left\{x\right\} & \text{ for } x \in A^n ,
            \\ \left\{\frac{x}{\|x\|}\right\} & \text{ for } x \in B_1(0) \setminus B_{\sfrac{\sqrt{2}}{2}}(0) ,
            \\ \left\{v_k : k = 0, \ldots, n \text{ such that } x \in P_k^{\sfrac{\sqrt{2}}{2}}\right\} & \text{ for } x \in B_{\sfrac{\sqrt{2}}{2}}(0) .
          \end{cases}
        $$
        
        \begin{figure}[H]
        \centering
        \begin{subfigure}[t]{0.47\textwidth}
          \centering
          \resizebox{\linewidth}{!}{%
            \begin{tikzpicture}[scale=1]

              \colorlet{outsideDarkGray}{black!70}
              \path[use as bounding box] (-2.7, -2.7) rectangle (2.7, 2.7);
              \clip (-2.7, -2.7) rectangle (2.7, 2.7);

              \node[circle, fill=outsideDarkGray, inner sep=0.75pt, label=right:$v_0$] (v1) at ({cos(0)}, {sin(0)}) {};
              \node[circle, fill=outsideDarkGray, inner sep=0.75pt, label=above left:$v_1$] (v2) at ({cos(120)}, {sin(120)}) {};
              \node[circle, fill=outsideDarkGray, inner sep=0.75pt, label=below left:$v_2$] (v3) at ({cos(240)}, {sin(240)}) {};
              \node[circle, fill=black!25, inner sep=0.75pt] (origin) at (0, 0) {};

              \begin{scope}[on background layer]
                \path[fill=outsideDarkGray, fill opacity=0.6, even odd rule]
                  (-2.5, -2.5) rectangle (2.5, 2.5)
                  (0, 0) circle (1cm);

                \path[fill=magenta, fill opacity=0.1, even odd rule]
                  (0, 0) circle (1cm)
                  (0, 0) circle[radius={sqrt(2)/2}];

                \draw[gray, dotted] (v1) -- (v2);
                \draw[gray, dotted] (v2) -- (v3);
                \draw[gray, dotted] (v3) -- (v1);

                \begin{scope}[transparency group, opacity=0.35]
                  \fill[red] (0, 0) -- (-60:{sqrt(2)/2})
                    arc[start angle=-60, end angle=60, radius={sqrt(2)/2}] -- cycle;
                  \fill[green] (0, 0) -- (60:{sqrt(2)/2})
                    arc[start angle=60, end angle=180, radius={sqrt(2)/2}] -- cycle;
                  \fill[blue] (0, 0) -- (180:{sqrt(2)/2})
                    arc[start angle=180, end angle=300, radius={sqrt(2)/2}] -- cycle;
                \end{scope}

                \foreach \angle in {0,5,...,355} {
                  \draw[magenta, thin, opacity=0.75] ({cos(\angle)}, {sin(\angle)}) -- ({(sqrt(2)/2)*cos(\angle)}, {(sqrt(2)/2)*sin(\angle)});
                }

                \draw[black] (0, 0) circle (1cm);
              \end{scope}

            \end{tikzpicture}
          }
        \end{subfigure}
        \caption{The sets $R[x]$ in the case $n = 2$}
      \end{figure}
        $R$ by construction assings to every point in $\bbR^n$ a point in $A^n$ and conversely contains $(x, x)$ for all $x \in A^n$, making it a correspondence. In particular we have 
        $$ \operatorname{Fix}_{\bbR^n}(R) = A^n . $$
        and we accordingly define the essential part $\tilde{R} = R \cap \left(B_1(0) \times \bbR^n\right)$.
        \\ By \cref{lem:boundingcorrespondencesthroughmotionandnonfixpointcorrespondences} we now see that
        \begin{equation} \label{eq:distortionviaMotionAndEssentialPartForEuclideanSpace}
          \dis(R) \leq \max\left\{\dis(\tilde{R}), \operatorname{Mot}(\tilde{R})\right\} .
        \end{equation}
        We now examine $\|x-y\|$ for $(x,y) \in \tilde{R}$ and note that
        \begin{equation} \label{eq:motionofthecorrespondenceEuclideanSpace}
          \|x - y\| 
          \begin{cases} 
            \in \left[0, 1 - \frac{\sqrt{2}}{2}\right] \subseteq [0, \frac{\sqrt{2}}{2}] & \text{ for } x \in B_1(0) \setminus B_{\sfrac{\sqrt{2}}{2}}(0) ,
            \\ \in \left[0, \sqrt{2}\right] & \text{ for } x \in B_{\sfrac{\sqrt{2}}{2}}(0) .
          \end{cases}
        \end{equation}
        wherein we used \cref{lem:MotionOfInnerCirclePoints}.
        \\ We can conclude that $\operatorname{Mot}(\tilde{R}) \leq \sqrt{2} \leq l_n$ and by \eqref{eq:distortionviaMotionAndEssentialPartForEuclideanSpace} are left to examine $\dis(\tilde{R})$.
        \\ We now fix pairs $(x, y), (x', y') \in \tilde{R}$ to examine the correspondence's distortion and w.l.o.g. assume $\|x\| \leq \|x'\|$ for simplicity. We distinguish the following all-encompassing cases:
        
        \textbf{Case 1)} $x, x' \in B_1(0) \setminus B_{\sfrac{\sqrt{2}}{2}}(0)$
        \\ By \eqref{eq:motionofthecorrespondenceEuclideanSpace} we get
        $$ \left|\left\|x - x'\right\| - \left\|y - y'\right\|\right| \leq \left\|x - y\right\| + \left\|x' - y'\right\| \leq \frac{\sqrt{2}}{2} + \frac{\sqrt{2}}{2} \leq \sqrt{2} \leq l_n . $$
        
        \textbf{Case 2)} $x \in B_{\sfrac{\sqrt{2}}{2}}(0)$ and $x' \in B_1(0) \setminus B_{\sfrac{\sqrt{2}}{2}}(0)$
        \\ We are then in the situation of \cref{lem:DistortionForAnnulusAndInnerCircle} with $x \in P_k^{\sfrac{\sqrt{2}}{2}}$ and $y = v_k$ for some $k$ and as we have seen, we have
        $$ \left|\left\|x - x'\right\| - \left\|y - y'\right\|\right| = \left|\left\|x - x'\right\| - \left\|v_k - \frac{x'}{\|x'\|}\right\|\right| \leq \sqrt{2} \leq l_n $$
        Since we assumed $\|x\| \leq \|x'\|$, if we are not in cases 1 or 2, we must have $x, x' \in B_{\sfrac{\sqrt{2}}{2}}(0)$.

        \textbf{Case 3)} $x, x' \in B_{\sfrac{\sqrt{2}}{2}}(0)$ and $y = y'$
        \\ In this case we have 
        $$ \left|\left\|x - x'\right\| - \left\|y - y'\right\|\right| = \left\|x - x'\right\| \leq \diam\left(B_{\sfrac{\sqrt{2}}{2}}(0)\right) = \sqrt{2} \leq l_n . $$

        \textbf{Case 4)} $x, x' \in B_{\sfrac{\sqrt{2}}{2}}(0)$ and $y \neq y'$.
        \\ The distance of two different vertices $y = v_{k}$ and $y' = v_{k'}$ is given by $l_n$.
        \\ Since $\|x - x'\| \in [0, 2] \subseteq [0, 2 l_n]$, we have
        $$ \left|\left\|x - x'\right\| - \left\|y - y'\right\|\right| \leq l_n . $$
        Note that in the particular case $x = x' = 0$ and $y \neq y'$, the value $l_n$ is assumed, showing $\dis(R) = l_n$.
        Therefore,
        $$ \dGH(\bbR^n, A^n) \leq \frac{1}{2}\dis(R) = \sqrt{\frac{n+1}{2n}} . $$
        Since $\dH(\bbR^n, A^n) = 1$, \hyperlink{cor:euclidean-lower-bound}{Corollary~A} gives the reverse inequality. Thus equality holds, proving the theorem with $A = A^n$.
      \end{proof}
      \begin{corollary} \label{cor:MainTheoremRnArbitrarilySmall}
        For $n \in \bbN$ and $\varepsilon > 0$, let $A^n := \bbR^n \setminus B_1(0)$ be the set constructed in the proof of \cref{thm:MainTheoremRn}. The set $\varepsilon \cdot A^n \subseteq \bbR^n$ satisfies
        $$ \dGH(\bbR^n, \varepsilon \cdot A^n) = \frac{l_n}{2} \dH(\bbR^n, \varepsilon \cdot A^n) \quad \text{ as well as } \quad \dH(\bbR^n, \varepsilon \cdot A^n) \leq \varepsilon . $$
      \end{corollary}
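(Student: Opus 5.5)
The plan is to obtain the corollary from \cref{thm:MainTheoremRn} by rescaling. The key observation is that both $\dGH$ and $\dH$ are positively homogeneous of degree one under scaling of the metric. Consider the dilation $\phi_\varepsilon : \bbR^n \to \bbR^n$, $x \mapsto \varepsilon x$. It is a bijection that multiplies all Euclidean distances by $\varepsilon$. In particular it is an isometry from $(\bbR^n, \varepsilon\|.\|_2)$ onto $(\bbR^n, \|.\|_2)$, and it maps $A^n$ onto $\varepsilon \cdot A^n$.

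\textbf{Hausdorff distance.} First I would show $\dH(\bbR^n, \varepsilon A^n) = \varepsilon\, \dH(\bbR^n, A^n) = \varepsilon$. This can be done directly: every $z \in \bbR^n$ satisfies $d(z, \varepsilon A^n) = \varepsilon\, d(z/\varepsilon, A^n)$, and we take the supremum over $z$. Alternatively, note that $\varepsilon A^n = \bbR^n \setminus B_\varepsilon(0)$, so the computation in the proof of \cref{thm:MainTheoremRn} applies verbatim with $1$ replaced by $\varepsilon$. This gives the claimed inequality $\dH \le \varepsilon$, and in fact equality.

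\textbf{Gromov--Hausdorff distance.} Next I would show $\dGH(\bbR^n, \varepsilon A^n) = \varepsilon\, \dGH(\bbR^n, A^n)$ using the correspondence formulation. Given a correspondence $R \subseteq \bbR^n \times A^n$, the image $R_\varepsilon := \{(\varepsilon x, \varepsilon y) : (x,y) \in R\}$ is a correspondence between $\bbR^n$ and $\varepsilon A^n$. Its distortion is $\dis(R_\varepsilon) = \varepsilon \dis(R)$, since each term $|\varepsilon\|x-x'\| - \varepsilon\|y-y'\||$ scales by $\varepsilon$. The map $R \mapsto R_\varepsilon$ is a bijection between the two sets of correspondences, with inverse given by scaling by $\varepsilon^{-1}$. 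Hence the infima scale by $\varepsilon$. Combining this with \cref{thm:MainTheoremRn} yields
$$\dGH(\bbR^n, \varepsilon A^n) = \varepsilon \sqrt{\tfrac{n+1}{2n}} = \tfrac{l_n}{2}\, \varepsilon = \tfrac{l_n}{2}\, \dH(\bbR^n, \varepsilon A^n),$$
using $\tfrac{l_n}{2} = \sqrt{\tfrac{n+1}{2n}}$.

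There is no real obstacle. The only point needing care is that the noncompact convention for $\dGH$ adopted in the paper is still given by the correspondence infimum, so the homogeneity argument applies without compactness. As a sanity check, the lower bound also follows independently from Corollary~A applied to $\varepsilon A^n$, since $0 < \dH(\bbR^n, \varepsilon A^n) = \varepsilon < \infty$.
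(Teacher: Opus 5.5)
Your proposal is correct and takes essentially the same approach as the paper, which likewise deduces the corollary from \cref{thm:MainTheoremRn} by noting that $\bbR^n$ is invariant under the dilation $x \mapsto \varepsilon x$ and that both $\dGH$ and $\dH$ scale by the factor $\varepsilon$. Your explicit bijection of correspondences $R \mapsto R_\varepsilon$ with $\dis(R_\varepsilon) = \varepsilon \dis(R)$ spells out the homogeneity step that the paper states in one line, and you correctly observe that in fact $\dH(\bbR^n, \varepsilon \cdot A^n) = \varepsilon$.
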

      \begin{proof}
        Since $\bbR^n$ is invariant under dilation, we see that
        $$ \dGH(\bbR^n, \varepsilon \cdot A^n) = \dGH(\varepsilon \cdot \bbR^n, \varepsilon \cdot A^n) = \varepsilon \cdot \dGH(\bbR^n, A^n) , $$
        and in a similar way, 
        $$ \dH(\bbR^n, \varepsilon \cdot A^n) = \varepsilon \cdot \dH(\bbR^n, A^n) . $$
      \end{proof}
      \begin{remark} \label{rem:correspondenceForArbitrarilySmallComplementsInRn}
        We shall denote the correspondence that results from rescaling the correspondence $R$ constructed in the proof of \cref{thm:MainTheoremRn} by a factor of $r > 0$ as $R_r$. We shall refer to it again in the proof of \cref{thm:MainTheoremManifolds}.
      \end{remark}
  \subsection{The case of manifolds}
    \begin{restatable}{theorem}{mainTheoremManifolds}\label{thm:MainTheoremManifolds}
      \mainTheoremManifoldsStatement
    \end{restatable}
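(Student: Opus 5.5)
We transfer the Euclidean correspondence $R_r$ from \cref{rem:correspondenceForArbitrarilySmallComplementsInRn} to $M$ through the exponential map at $a$, and control the error with the bilipschitz estimate of \cref{lem:uniformapproximationforboundedsetsandBilipschitzmaps}.

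\textbf{Step 1: choice of $r_0$ and the Hausdorff distance.} Fix $L>1$ and an auxiliary $L'\in(1,L)$, to be specified later. Choose $\rho>0$ such that:
\begin{enumerate}[(a)]
\item $\exp_a$ and $\operatorname{norm}_a$ are $L'$-bilipschitz on $\bar B_\rho(0_{T_aM})$ and $\bar B_\rho(a)$;
\item distances to $a$ are preserved on $B_\rho(a)$.
\end{enumerate}
Set $r_0:=\rho/4$, so that for $r<r_0$ all points involved lie in $\bar B_{4r}(a)$. For $p\in B_r(a)$ the radial geodesic from $a$ through $p$, continued to length $r$, shows $d(p,M\setminus B_r(a))\le r-d(a,p)$. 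Conversely, any curve from $p$ leaving $B_r(a)$ has length at least $r-d(a,p)$. Hence the supremum over $p$ equals $r$, attained in the limit $p\to a$, and $\dH(M\setminus B_r(a),M)=r$.

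\textbf{Step 2: the correspondence.} Identify $T_aM\cong\bbR^m$ isometrically via $g_a$. Define $S\subseteq M\times (M\setminus B_r(a))$ as follows:
\begin{enumerate}[(a)]
\item $S[p]=\{p\}$ for $p\notin B_r(a)$;
\item for $p\in B_r(a)$, $S[p]:=\exp_a\big(R_r[\operatorname{norm}_a(p)]\big)$.
\end{enumerate}
The image lies in $\exp_a(\bar B_r(0)\setminus B_r(0))=\partial B_r(a)\subseteq M\setminus B_r(a)$ by (b) of Step 1. So $S$ is a correspondence with fixed set $M\setminus B_r(a)$. By \cref{lem:boundingcorrespondencesthroughmotionandnonfixpointcorrespondences}, $\dis(S)\le\max\{\operatorname{Mot}(\tilde S),\dis(\tilde S)\}$.

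\textbf{Step 3: bounding the two terms.} For the motion, $(p,q)\in\tilde S$ means $(\operatorname{norm}_a p,\operatorname{norm}_a q)\in \tilde R_r$. By the Euclidean proof, the Euclidean distance between these two vectors is at most $\sqrt2\, r$, hence $d(p,q)\le L'\sqrt2\,r\le L' l_m r$.

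For the distortion, take $(p,q),(p',q')\in\tilde S$ with Euclidean preimages $(x,y),(x',y')\in\tilde R_r$. The Euclidean proof gives
$$|\|x-x'\|-\|y-y'\||\le l_m r,\qquad \max\{\|x-x'\|,\|y-y'\|\}\le 2r.$$
Since $d(p,p')\overset{L'}{\approx}\|x-x'\|$ and $d(q,q')\overset{L'}{\approx}\|y-y'\|$, \cref{lem:uniformapproximationforboundedsetsandBilipschitzmaps} yields
$$|d(p,p')-d(q,q')|\le L' l_m r+2(L'-L'^{-1})r.$$
Therefore $\dGH\le\frac12\dis(S)\le \frac{r}{2}\big(L'l_m+2(L'-L'^{-1})\big)$.

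\textbf{Step 4: choice of $L'$ and the main obstacle.} Choose $L'$ close enough to $1$ that $L'l_m+2(L'-L'^{-1})\le L\,l_m$, which is possible since the left side tends to $l_m$ as $L'\to1$. This gives $\dGH\le L\frac{l_m}{2}r=L\sqrt{\frac{m+1}{2m}}\dH$.

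The main obstacle is ensuring that the bilipschitz comparison uses the intrinsic distance $d_g$ of $M$. The subspace metric on $M\setminus B_r(a)$ is the restriction of $d_g$, so this is automatic. One must also check that geodesic distances between points of $\bar B_r(a)$ are realized, up to the factor $L'$, by the normal chart. This is exactly what the bilipschitz property on $\bar B_\rho(a)$ with $\rho\ge 4r$ guarantees: the relevant points lie within distance $2r$ of one another, so any competing path leaving $B_\rho(a)$ is too long to matter.
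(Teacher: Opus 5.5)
Your proposal is correct and follows essentially the same route as the paper: you transport the rescaled Euclidean correspondence $R_r$ to $M$ via $\exp_a$ and $\operatorname{norm}_a$, apply \cref{lem:boundingcorrespondencesthroughmotionandnonfixpointcorrespondences}, and bound motion and distortion of the essential part with \cref{lem:uniformapproximationforboundedsetsandBilipschitzmaps}, using an auxiliary constant $L'$ with $L' l_m + 2(L' - L'^{-1}) \le L\, l_m$ exactly as the paper does with its constant $K$. The differences are cosmetic: you use one auxiliary constant for both motion and distortion, the factor $4$ in $r_0 = \rho/4$ is superfluous, and you give a more explicit two-sided argument that the Hausdorff distance equals $r$.
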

    \begin{proof}[Proof of \cref{thm:MainTheoremManifolds}]
      \textbf{Part 1)}
      \\ Note that for $r_1 > 0$ sufficiently small and $r \in (0, r_1)$, the exponential map $\exp_a$ is a diffeomorphism from $B_r(0_{T_a M})$ onto its image $B_r(a)$ and in particular distance-preserving along radial lines emanating from $a$.
      This in particular gives us
      $$ \dH(M, M \setminus B_r(a)) = r . $$
      
      \textbf{Part 2)}
      \\ Recall for $r \in (0, r_1)$ the correspondence $R_r \subseteq \bbR^m \times (r \cdot A^m)$ discussed in \cref{rem:correspondenceForArbitrarilySmallComplementsInRn}.
      \\ We now identify $T_a M$ equipped with the Riemannian inner product $g_a$ at $a$ with the Euclidean space $\bbR^m$, allowing us for $r \in (0, r_1)$ to define a correspondence
      $$ R_r^M[p] = 
      \begin{cases}
        \exp_a(R_r[\operatorname{norm}_a(p)]) & \text{ for } p \in B_{r}(a)
        \\ \{p\} & \text{ for } p \in M \setminus B_r(a) .
      \end{cases} $$
      We see that it fixes the points
      $$ \operatorname{Fix}_M(R_r^M) = M \setminus B_r(a) . $$
      and define accordingly its essential part
      $$ \tilde{R}_r^M := R_r^M \cap \left(B_r(a) \times M\right) . $$
      For $(p, q) \in B_r(a) \times \bar{B}_r(a)$ we have by design of our correspondence
      $$ (p, q) \in R_r^M \quad \text{ if and only if } \quad (\operatorname{norm}_a(p), \operatorname{norm}_a(q)) \in R_r . $$
      
      \textbf{Part 3)}
      \\ We know that if we choose $r_2 \in (0, r_1)$ small enough and $r \in (0, r_2)$, $\operatorname{norm}_a$ restricts to an $L$-bilipschitz map on $\bar{B}_r(a)$ and we see that for $(p, q) \in \tilde{R}_r^M$,
      $$ d(p, q) \leq L d(\operatorname{norm}_a(p), \operatorname{norm}_a(q)) \leq L \operatorname{Mot}(\tilde{R}_r) . $$
      giving us in the supremum that
      \begin{equation} \label{eq:boundforMotionForManifoldCorrespondence}
        \operatorname{Mot}(\tilde{R}_r^M) \leq L \operatorname{Mot}(\tilde{R}_r) \leq r L l_m .
      \end{equation}

      \textbf{Part 4)}
      Since
      $$ \lim_{K \searrow 1} K l_m + 2\left(K - K^{-1}\right) = l_m < L l_m , $$ 
      we can choose a $K > 1$ such that 
      $$ K l_m + 2 \left(K - K^{-1}\right) \leq L l_m . $$
      We now choose $r_0 \in (0, r_2)$ so small that for $r \in (0, r_0)$ the restriction of $\operatorname{norm}_a$ to $\bar{B}_r(a)$ is $K$-bilipschitz.
      \\ By \cref{lem:boundingcorrespondencesthroughmotionandnonfixpointcorrespondences} and \eqref{eq:boundforMotionForManifoldCorrespondence} we are only left to show $\dis(\tilde{R}_r^M) \leq L \cdot l_m r$.
      \\ For $(p, q), (p', q') \in \tilde{R}_r^M$ we have
      $$ d(p, p') \overset{K}{\approx} d(\operatorname{norm}_a(p), \operatorname{norm}_a(p')) \quad \text{ and } d(q, q') \overset{K}{\approx} d(\operatorname{norm}_a(q), \operatorname{norm}_a(q')) $$
      as well as
      $$ d(p, p'), d(q, q') \leq \diam(\bar{B}_r(a)) \leq 2r . $$
      Since $\operatorname{norm}_a$ is $K$-Lipschitz on $B_r(a)$, and by \cref{lem:uniformapproximationforboundedsetsandBilipschitzmaps} we have
      \begin{flalign*}
        & \left|d(p, p') - d(q, q')\right|
        \\ \leq & K \cdot \left|d(\operatorname{norm}_a(p), \operatorname{norm}_a(p')) - d(\operatorname{norm}_a(q), \operatorname{norm}_a(q'))\right| 
        \\ & \quad + (K - K^{-1})\max\left\{d(\operatorname{norm}_a(p), \operatorname{norm}_a(p')), d(\operatorname{norm}_a(q), \operatorname{norm}_a(q'))\right\}
        \\ \leq & K \dis(R_r) + (K - K^{-1}) 2r 
        \\ = & r \cdot \left(K l_m + 2\left(K - K^{-1}\right)\right)
        \\ \leq & L l_m r .
      \end{flalign*}
      Taking the supremum over all pairs $(p, q), (p', q') \in \tilde{R}_r^M$ gives $\dis(\tilde{R}_r^M) \leq L l_m r$, which by \cref{lem:boundingcorrespondencesthroughmotionandnonfixpointcorrespondences} and \eqref{eq:boundforMotionForManifoldCorrespondence} completes the proof.
    \end{proof}
    \begin{remark}[Manifolds of lesser regularity]
      While the positive results use curvature and thus assume at least $C^2$-regularity of the Riemannian metric, the central assumption in the proof of \cref{thm:MainTheoremManifolds} is the existence of a bilipschitz map with constant arbitrarily close to $1$ from a small neighborhood of a point in the manifold to a neighborhood in Euclidean space.
      \\ For this, mere continuity of the Riemannian metric is sufficient, as this continuity implies that any point has a neighborhood and on it a chart whose differential is $L$-bilipschitz on the length of tangent vectors, which implies composing with the chart is $L$-bilipschitz on the length of curves, in particular the length of shortest curves between two points in a potentially smaller neighborhood and so after restriction, our chart is $L$-bilipschitz in the sense of \cref{def:bilipschitz-map}.
    \end{remark}

\let\standardbibitem\bibitem
\renewcommand{\bibitem}[2][]{%
  \def\currentbibkey{#2}%
  \def\schmiedlbibkey{Schmiedl2017}%
  \ifx\currentbibkey\schmiedlbibkey
    \standardbibitem[Schm17]{#2}%
  \else
    \standardbibitem[#1]{#2}%
  \fi
}
\bibliographystyle{alpha}
\bibliography{Main}

\end{document}